\theoremstyle{plain}
\newtheorem{Theorem}{Theorem}[section]
\newtheorem{Lemma}[Theorem]{Lemma}
\newtheorem{Proposition}[Theorem]{Proposition}
\newtheorem{Corollary}[Theorem]{Corollary}
\theoremstyle{definition}
\newtheorem{Definition}[Theorem]{Definition}
\newcommand{\ssssarr}{\hbox to 15pt{\rightarrowfill}}
\newcommand{\sssarr}{\hbox to 20pt{\rightarrowfill}}
\newcommand{\ssarr}{\hbox to 30pt{\rightarrowfill}}
\newcommand{\sarr}{\hbox to 40pt{\rightarrowfill}}
\newcommand{\arr}{\hbox to 60pt{\rightarrowfill}}
\newcommand{\larr}{\hbox to 60pt{\leftarrowfill}}
\newcommand{\Arr}{\hbox to 80pt{\rightarrowfill}}
\newcommand{\mapdown}[1]{\Big\downarrow\rlap{$\vcenter{\hbox{$\scriptstyle#1$}}$}}
\newcommand{\matr}[1]{\begin{matrix}#1\end{matrix}}
\newcommand{\Aut}{\mathop{{\rm Aut}}\nolimits}
\newcommand{\be}{\begin{equation}}
\newcommand{\ee}{\end{equation}}
\newcommand{\beqn}{\begin{eqnarray}}
\newcommand{\eeqn}{\end{eqnarray}}
\newcommand{\nnb}{\begin{eqnarray*}}
\newcommand{\nne}{\end{eqnarray*}}
\newcommand{\dd}{{\tt d}} 
\newcommand{\lra}[2]{\langle #1, #2 \rangle}
\newcommand{\R}{\ensuremath{\mathbb{R}}}
\newcommand{\C}{\ensuremath{\mathbb{C}}}
\newcommand{\Z}{\ensuremath{\mathbb{Z}}}
\newcommand{\N}{\ensuremath{\mathbb{N}}}
\newcommand{\T}{\ensuremath{\mathbb{T}}}
\newcommand{\U}{\ensuremath{\mathrm{U}}}
\newcommand{\PU}{\ensuremath{\mathrm{PU}}}
\newcommand{\Sp}{\ensuremath{\mathrm{Sp}}}
\renewcommand{\Im}{\ensuremath{\mathrm{Im}}}
\renewcommand{\Re}{\ensuremath{\mathrm{Re}}}
\newcommand{\ham}{\ensuremath{\mathfrak{ham}}}
\newcommand{\fk}{\ensuremath{\mathfrak{k}}}
\newcommand{\fg}{\ensuremath{\mathfrak{g}}}
\newcommand{\g}{\ensuremath{\mathfrak{g}}}
\newcommand{\cH}{\ensuremath{\mathcal{H}}}
\newcommand{\cL}{\ensuremath{\mathcal{L}}}
\newcommand{\cM}{\ensuremath{\mathcal{M}}}
\newcommand{\bP}{\ensuremath{\mathbb P}}
\newcommand{\bS}{\ensuremath{\mathbb S}}
\newcommand{\bL}{\ensuremath{\mathbb L}}
\newcommand{\bxi}{\boldsymbol\xi}
\newcommand{\bc}{\boldsymbol{c}}
\newcommand{\ol}[1]{\ensuremath{\overline{#1}}}
\newcommand{\one}{\ensuremath{\mathbf{1}}}
\newcommand{\oline}{\overline} 
\newcommand{\subeq}{\subseteq} 
\newcommand{\la}{\langle} 
\newcommand{\ra}{\rangle} 
\renewcommand{\:}{\colon}
\begin{document}

\title{Momentum Maps for Smooth\\ Projective Unitary Representations} 
\author{Bas Janssens\footnote{
B.J.\ is supported by the NWO grant 
613.001.214 ``Generalised Lie algebra sheaves''.} {} and  
Karl-Hermann Neeb\footnote{K.-H.~N. is supported by  
the DFG-grant NE 413/7-2, Schwerpunktprogramm ``Darstellungstheorie''.}
}
\maketitle

\begin{abstract}
For a smooth projective unitary representation $(\overline{\rho},\cH)$
of a locally convex Lie group $G$, 
the projective space $\bP(\cH^{\infty})$ of smooth vectors is a locally convex
K\"ahler manifold. We show that the action of $G$ on $\bP(\cH^{\infty})$
is weakly Hamiltonian, and lifts 
to a Hamiltonian action of the central $\U(1)$-extension $G^{\sharp}$
obtained from the projective representation.
We identify the non-equivariance cocycles obtained from the 
weakly Hamiltonian action with those obtained from the projective representation, 
and give some integrality conditions on the image of the momentum map.
\end{abstract}

\section{Introduction}

Let $G$ be a locally convex Lie group with Lie algebra $\fg$, 
and let $\overline{\rho} \colon G \rightarrow \PU(\cH)$ be a projective unitary representation of $G$.
It is called \emph{smooth} if the set $\bP(\cH)^{\infty}$ of smooth rays
is dense in $\bP(\cH)$, a ray $[\psi] \in \bP(\cH)$ being called smooth if its orbit map
$G \rightarrow \bP(\cH) \colon g \mapsto \overline{\rho}(g)[\psi]$ is smooth.
For finite dimensional Lie groups, a projective representation is smooth if and only if it is continuous.
For infinite dimensional Lie groups, smoothness is a natural requirement.

In \cite[Theorem 4.3]{JN15}, we showed that for smooth projective unitary representations, 
the central extension 
\[G^{\sharp} := \{(g,U) \in G \times \U(\cH) \,;\, \overline{\rho}(g) = [U]\}\] 
of $G$ by $\U(1)$ is a central extension of locally convex Lie groups,
 in the sense that the projection $G^{\sharp} \rightarrow G$
is a homomorphism of Lie groups, as well as a principal $\U(1)$-bundle.
Moreover, the projective representation $\overline{\rho} \colon G \rightarrow \PU(\cH)$ of $G$
then lifts to a linear representation $\rho \colon G^{\sharp} \rightarrow \U(\cH)$ of $G^{\sharp}$,
with the property that $\rho(z) = z\one$ for all $z\in \U(1)$. 
If $\cH^{\infty} \subseteq \cH$ is the space of smooth vectors for $\rho$, then 
$\bP(\cH^{\infty})$ is equal to $\bP(\cH)^{\infty}$, the space of smooth rays for $\overline{\rho}$.

The main goal of these notes  is to reinterpret this central extension in the 
context of symplectic geometry 
of the projective space $\bP(\cH^{\infty})$ and its prequantum line bundle, the tautological 
bundle 
$\bL(\cH^{\infty}) \rightarrow \bP(\cH^{\infty})$.
In order to equip $\bP(\cH^{\infty})$ with a symplectic structure,
we need to consider it as a locally convex manifold.
For this, we need a locally convex topology on $\cH^{\infty}$ 
that is compatible with the $G^{\sharp}$-action. 

\begin{Definition}\label{defstrong}
The \emph{strong topology} on $\cH^{\infty}$ is the locally convex 
topology induced by the norm on $\cH$ and the seminorms 
\[p_{B} (\psi) := \sup_{\bxi \in B} \|\dd\rho_k(\bxi)\psi\|\,,\] 
where $B \subseteq (\fg^\sharp)^k$, $k \in \N$, 
runs over the bounded sets, and the derived representation 
$\dd\rho$ of $\fg^{\sharp}$ is extended to $(\fg^\sharp)^k$ by 
$\dd\rho_k(\xi_1, \ldots, \xi_k) := \dd\rho(\xi_1) \cdots \dd\rho(\xi_k)$. 
\end{Definition}


We will show that with this topology, $\bP(\cH^{\infty})$ becomes
a locally convex K\"ahler manifold with prequantum line bundle $\bL(\cH^{\infty}) \rightarrow \bP(\cH^{\infty})$. If we identify its tangent space 
$T_{[\psi]}\bP(\cH^{\infty})$ for any unit vector $\psi$ with 
$\{\delta v \in \cH^{\infty}\,;\, \langle \psi, \delta v \rangle = 0\}\,,$
then the symplectic form $\Omega$ on $\bP(\cH^{\infty})$
is given by 
\[
\Omega_{[\psi]}(\delta v,\delta w) = 2\Im(\delta v,\delta w)\,.
\] 
Similarly, the sphere $\bS(\cH^{\infty})$ becomes a locally convex principal
$\U(1)$-bundle over $\bP(\cH^{\infty})$, to which the
prequantum line bundle $\bL(\cH^{\infty})\rightarrow \bP(\cH^{\infty})$ is associated 
along the 
canonical representation $\U(1) \rightarrow \mathrm{GL}(\C)$.
The connection $\nabla$ on $\bL(\cH^{\infty})$ with curvature 
$R_{\nabla} = \Omega$ is associated to the connection 1-form
$\alpha$ on $\bS(\cH^{\infty})$, given by 
\[
\alpha_{\psi}(\delta v) = -i\lra{\psi}{\delta v}
\]
under the identification
$
T_{\psi}\bS(\cH^{\infty}) 
\simeq \{\delta v \in \cH^{\infty}\,;\, \mathrm{Re}\langle \psi, \delta v \rangle = 0\}\,.
$
%
%

The group $G$ acts on $\bP(\cH^{\infty})$ by K\" ahler automorphisms, hence in particular by symplectomorphisms.
The action of the central extension
$G^{\sharp}$ lifts to $\bL(\cH^{\infty})\rightarrow \bP(\cH^{\infty})$, on which it acts by holomorphic  
quantomorphisms (connection preserving bundle automorphisms).
%
If the projective representation of $G$ is faithful, then 
the central extension $G^{\sharp}$
is precisely the group of quantomorphisms of 
$(\bL(\cH^{\infty}),\nabla)$ that cover the $G$-action on $(\bP(\cH^{\infty}),\Omega)$.
\[  \matr{ 
G^\sharp & \circlearrowright  & \Aut(\bL(\cH^{\infty}),\nabla) \\ 
\mapdown{}&  & \mapdown{} \\ 
G & \circlearrowright & \Aut(\bP(\cH^{\infty}),\Omega)}
\]

We show that for any locally convex Lie group $G$, 
the action $G \times \bP(\cH^{\infty}) \rightarrow \bP(\cH^{\infty})$
obtained from a smooth projective unitary representation 
is \emph{separately smooth} in the following sense.

\begin{Definition}
An action $\alpha \colon G \times \mathcal{M} \rightarrow \mathcal{M}, 
(g,m) \mapsto \alpha_g(m)$ 
of a locally convex Lie group $G$ on a locally convex manifold
$\mathcal{M}$ is called \emph{separately smooth} if for every $g\in G$ and $m \in M$, the 
orbit map $\alpha_{m} \colon G \rightarrow \mathcal{M}, g \mapsto \alpha_g(m)$ 
and the action maps  $\alpha_{g} \colon \mathcal{M} \rightarrow \mathcal{M}$
are smooth.
\end{Definition}


For Banach Lie groups $G$, the action is a smooth map  
$G \times \bP(\cH^{\infty}) \rightarrow \bP(\cH^{\infty})$ by \cite[Theorem~4.4]{Ne10b}, but a certain lack of smoothness 
is unavoidable as soon as one goes to Fr\'echet--Lie groups.
Indeed, consider the unitary representation of the 
Fr\'echet--Lie group $G=\R^{\N}$  
on $\cH = \ell^2(\N,\C)$, defined by 
$\rho(\phi)\psi = (e^{i\phi_1}\psi_1, e^{i\phi_{2}}\psi_2, \ldots )$.
Then $\cH^{\infty} = \C^{(\N)}$ with the direct limit topology 
\cite[Example~3.11]{JN15}, but by \cite[Example 4.8]{Ne10b}, 
the action of $\fg$ on $\C^{\N}$ is discontinuous for \emph{any}
locally convex topology on $\cH^{\infty}$.
We therefore propose the following definition of (not necessarily smooth)
Hamiltonian actions on locally convex manifolds.

\begin{Definition}
An action $\alpha \colon G \times \mathcal{M} \rightarrow \mathcal{M}$ 
of a locally convex Lie group $G$ on a locally convex, symplectic manifold
$(\mathcal{M},\Omega)$ is called:
\begin{itemize}
\item \emph{Symplectic} if it is separately smooth, and $\alpha_{g}^*\Omega = \Omega$ for all $g\in G$.
\item \emph{Weakly Hamiltonian} if it is symplectic, and $i_{X_{\xi}}\Omega$ is 
exact for all $\xi \in \fg$, where $X_{\xi}$ is the fundamental vector field of $\xi$
on $\bP(\cH^{\infty})$.
\item \emph{Hamiltonian} if, moreover, $i_{X_{\xi}}\Omega = d\mu(\xi)$ for a $G$-equivariant momentum map $\mu \colon \mathcal{M} \rightarrow \fg'$ into the continuous dual of $\fg$, which is smooth if $\fg'$ is equipped 
with the topology of uniform convergence on bounded subsets.
\end{itemize}
\end{Definition}
%
Our main result is that the action of $G^{\sharp}$ on $\bP(\cH^{\infty})$ is Hamiltonian in the sense of the above definition.
\begin{Theorem}\label{Thmhamaction}
The action of $G^{\sharp}$ on $(\bP(\cH^{\infty}),\Omega)$ is Hamiltonian,
with momentum map
$\mu \colon \bP(\cH^{\infty}) \rightarrow \fg^{\sharp}{}'$ given by 
\be
\mu_{[\psi]}(\xi) = \frac{\lra{\psi}{i\dd\rho(\xi) \psi}}{\lra{\psi}{\psi}}\,.
\ee
\end{Theorem}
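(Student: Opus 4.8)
The plan is to check, one at a time, the three defining properties of a Hamiltonian action for the stated $\mu$: the relation $i_{X_{\xi}}\Omega = d\mu(\xi)$, coadjoint equivariance, and smoothness of $\mu$ into $\fg^{\sharp}{}'$ with the topology of uniform convergence on bounded sets. That the action is symplectic (separately smooth and $\Omega$-preserving) is already available and need not be reproved: since $\rho(z) = z\one$, the central $\U(1)$ acts trivially on $\bP(\cH^{\infty})$, so the $G^{\sharp}$-action factors through the $G$-action by K\"ahler automorphisms, which in particular preserve $\Omega$. The first genuinely new ingredient is the fundamental vector field. For $\xi \in \fg^{\sharp}$ the flow of $X_{\xi}$ through $[\psi]$ is $t \mapsto [\rho(\exp t\xi)\psi] = [e^{t\dd\rho(\xi)}\psi]$; differentiating at $t=0$ and projecting orthogonally to a unit representative $\psi$, in accordance with the identification $T_{[\psi]}\bP(\cH^{\infty}) \simeq \{\delta v : \lra{\psi}{\delta v}=0\}$, gives
\[
X_{\xi}([\psi]) = \dd\rho(\xi)\psi - \lra{\psi}{\dd\rho(\xi)\psi}\,\psi\,.
\]
This is well defined because $\dd\rho(\xi)$ is skew-adjoint and preserves $\cH^{\infty}$.

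For the central identity I would evaluate both one-forms on a tangent vector $\delta w$ with $\lra{\psi}{\delta w}=0$. On the left, using $\Omega_{[\psi]}(\delta v,\delta w) = 2\Im\lra{\delta v}{\delta w}$ and the fact that the component of $X_{\xi}$ along $\psi$ pairs to zero against $\delta w$, one gets $\Omega_{[\psi]}(X_{\xi},\delta w) = 2\Im\lra{\dd\rho(\xi)\psi}{\delta w}$. On the right, I would differentiate $[\psi] \mapsto \lra{\psi}{i\dd\rho(\xi)\psi}$ (for unit $\psi$) along $\delta w$; the product rule yields two terms, which combine via self-adjointness of $i\dd\rho(\xi)$ into the very same expression $2\Im\lra{\dd\rho(\xi)\psi}{\delta w}$. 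This identifies $i_{X_{\xi}}\Omega$ with $d\mu(\xi)$, and in particular shows the action is weakly Hamiltonian. I would also record that $[\psi]\mapsto\lra{\psi}{i\dd\rho(\xi)\psi}/\lra{\psi}{\psi}$ is $\U(1)$-invariant, so it genuinely descends to $\bP(\cH^{\infty})$.

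Equivariance is then a short unitarity computation. Since $\rho(g)$ is unitary and $\rho(g)^{-1}\dd\rho(\xi)\rho(g) = \dd\rho(\mathrm{Ad}(g^{-1})\xi)$ (differentiate $\rho(g)\rho(\exp t\xi)\rho(g)^{-1}=\rho(\exp t\,\mathrm{Ad}(g)\xi)$), substituting $[\rho(g)\psi]$ into $\mu$ and cancelling the unitary on both sides of the inner product gives $\mu_{[\rho(g)\psi]}(\xi) = \mu_{[\psi]}(\mathrm{Ad}(g^{-1})\xi)$, which is exactly coadjoint equivariance of $\mu$.

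The step I expect to be the main obstacle is the smoothness of $\mu\colon\bP(\cH^{\infty})\to\fg^{\sharp}{}'$ into the strong dual. Pointwise, $\mu_{[\psi]}$ is a continuous linear functional on $\fg^{\sharp}$ precisely because the seminorm $p_{B}$ of the strong topology (Definition \ref{defstrong}) controls $\sup_{\xi\in B}\|\dd\rho(\xi)\psi\|$ on every bounded $B\subseteq\fg^{\sharp}$, which is the estimate needed to bound $\sup_{\xi\in B}|\lra{\psi}{i\dd\rho(\xi)\psi}|$ and hence to land in $\fg^{\sharp}{}'$ with the bounded--open topology. To upgrade this to smoothness of the assignment $[\psi]\mapsto\mu_{[\psi]}$, I would pass to a local chart of $\bP(\cH^{\infty})$ around a unit vector, where $\mu$ becomes a quotient of the manifestly smooth expressions $(\psi,\xi)\mapsto\lra{\psi}{i\dd\rho(\xi)\psi}$ and $\psi\mapsto\lra{\psi}{\psi}$, and check that all iterated derivatives in the $\psi$-direction stay jointly continuous uniformly over bounded $B\subseteq\fg^{\sharp}$, again by invoking the defining seminorms of the strong topology. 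The entire difficulty is thus concentrated in matching the strong topology on $\cH^{\infty}$ to the bounded--open topology on $\fg^{\sharp}{}'$; once $X_{\xi}$ is in hand, the geometric identities themselves are elementary.
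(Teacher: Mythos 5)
Your argument is correct in substance, but it reaches the key identity by a genuinely different route than the paper. You compute the fundamental vector field $X_{\xi}([\psi]) = \dd\rho(\xi)\psi - \lra{\psi}{\dd\rho(\xi)\psi}\psi$ explicitly and verify $i_{X_{\xi}}\Omega = d\mu(\xi)$ by differentiating $\lra{\psi}{i\dd\rho(\xi)\psi}$ in a chart and matching terms via self-adjointness of $i\dd\rho(\xi)$ --- a direct, coordinate-level check. The paper instead works on the circle bundle $\bS(\cH^{\infty})$: since the $G^{\sharp}$-action preserves the connection $1$-form $\alpha$, Cartan's formula $L_{X_{\xi}}\alpha = d(i_{X_{\xi}}\alpha) + i_{X_{\xi}}d\alpha = 0$ together with $\Omega = d\alpha$ yields $i_{X_{\xi}}\Omega = d(-i_{X_{\xi}}\alpha)$ at once, and the Hamiltonian function $\mu_{[\psi]}(\xi) = \alpha_{\psi}(-X_{\xi}(\psi))$ then \emph{evaluates} to your formula. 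The prequantum-bundle argument buys exactness and the formula for $\mu$ simultaneously and with no computation in charts; your direct verification buys independence from the bundle-theoretic setup of Section 1 at the cost of an explicit calculation (which you carry out correctly). The equivariance step is identical in both.

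The one place where your proposal falls short of a proof is smoothness of $\mu$ into $(\fg^{\sharp})'$ with the bounded--open topology: you correctly isolate the point that the seminorms $p_{B}$ of the strong topology control $\sup_{\xi\in B}\|\dd\rho(\xi)\psi\|$, but ``check that all iterated derivatives in the $\psi$-direction stay jointly continuous uniformly over bounded $B$'' is a plan, not an argument, and chasing higher derivatives of the quotient expression by hand is needlessly painful. The paper's device closes this gap cleanly: the map $\psi \mapsto \dd\rho(\,\cdot\,)\psi$ is \emph{linear} and continuous from $\cH^{\infty}$ into $\mathrm{Lin}(\fg^{\sharp},\cH^{\infty})$ with the topology of uniform convergence on bounded sets (this is precisely what the $p_{B}$ give you for $k=1$), so $(\psi,\chi)\mapsto\lra{\psi}{\dd\rho(\,\cdot\,)\chi}$ is a continuous bilinear map into $(\fg^{\sharp})'$, hence automatically smooth, and $\widehat{\mu}$ is its composition with the diagonal. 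Restricting to $\bS(\cH^{\infty})$ and descending to $\bP(\cH^{\infty})$ then disposes of the normalising denominator without ever differentiating a quotient. You should replace your sketch by this bilinearity argument.
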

Since the $G$-action on $\bP(\cH^{\infty})$ factors through the action of $G^{\sharp}$, we immediately obtain
the following corollary.
\begin{Corollary}
The action of $G$ on $(\bP(\cH^{\infty}),\Omega)$ is weakly Hamiltonian.
\end{Corollary}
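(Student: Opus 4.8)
The plan is to deduce the corollary directly from Theorem \ref{Thmhamaction} by lifting infinitesimal generators from $\fg$ to $\fg^{\sharp}$. Unwinding the definition, weak Hamiltonicity of the $G$-action on $\bP(\cH^{\infty})$ asks for three things: that the action be separately smooth, that it satisfy $\alpha_{g}^{*}\Omega=\Omega$ for all $g\in G$, and that $i_{X_{\xi}}\Omega$ be exact for every $\xi\in\fg$. The first two are already in hand from the discussion above: separate smoothness of the action obtained from a smooth projective unitary representation was recorded earlier, and $G$ was seen to act by K\"ahler automorphisms, hence in particular by symplectomorphisms. Thus the only point requiring argument is the exactness of $i_{X_{\xi}}\Omega$.

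For this I would pass through the central extension. The projection $q\colon G^{\sharp}\to G$ is a homomorphism of Lie groups, and the associated sequence $0\to\mathfrak{u}(1)\to\fg^{\sharp}\to\fg\to 0$ is exact, so $dq\colon\fg^{\sharp}\to\fg$ is surjective; moreover, by construction the $G^{\sharp}$-action $g\cdot[\psi]=[\rho(g)\psi]$ descends along $q$ to the $G$-action $\bar\rho$. Given $\xi\in\fg$, choose any lift $\widehat{\xi}\in\fg^{\sharp}$ with $dq(\widehat{\xi})=\xi$. Since $q\circ\exp_{G^{\sharp}}=\exp_{G}\circ\,dq$, differentiating the identity $\exp(t\widehat{\xi})\cdot[\psi]=\bar\rho(\exp t\xi)[\psi]$ at $t=0$ shows that the fundamental vector fields coincide, $X_{\widehat{\xi}}=X_{\xi}$. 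Theorem \ref{Thmhamaction} then yields $i_{X_{\xi}}\Omega=i_{X_{\widehat{\xi}}}\Omega=d\mu(\widehat{\xi})$, which is exact, establishing the corollary.

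The argument is short precisely because all of the analytic content---the K\"ahler structure, the formula for $\mu$, its smoothness into $\fg^{\sharp}{}'$, and the $G^{\sharp}$-equivariance---has been absorbed into the theorem, so there is no genuine obstacle at this stage; the one routine verification is the naturality $X_{\widehat{\xi}}=X_{\xi}$ under the covering. It is worth noting, however, why one obtains only a \emph{weakly} Hamiltonian action for $G$ rather than a Hamiltonian one. The primitive $\mu(\widehat{\xi})$ depends on the chosen lift: two lifts of $\xi$ differ by a multiple of the central generator $c\in\fg^{\sharp}$ with $\dd\rho(c)=i\one$, and since the central circle acts on $\bP(\cH^{\infty})$ by scalars one has $X_{c}=0$ while $\mu_{[\psi]}(c)=-1$ is a nonzero constant. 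Hence $\mu(\widehat{\xi})$ is determined only up to an additive constant, and there is in general no $G$-equivariant linear lift $\fg\to C^{\infty}(\bP(\cH^{\infty}))$; the obstruction is exactly the cocycle defining $G^{\sharp}$, which reappears as the non-equivariance cocycle of the weakly Hamiltonian $G$-action.
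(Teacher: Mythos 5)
Your argument is correct and is essentially the paper's own: the corollary is stated there with the one-line justification that the $G$-action factors through the Hamiltonian $G^{\sharp}$-action, which is exactly your lifting of $\xi\in\fg$ to $\widehat{\xi}\in\fg^{\sharp}$ with $X_{\widehat{\xi}}=X_{\xi}$. Your closing remark on the lift-dependence of the primitive (via $X_{c}=0$ but $\mu_{[\psi]}(c)=-1$) correctly identifies why only weak Hamiltonicity survives for $G$, consistent with the paper's later observation that $\mathrm{Im}\,\mu$ lies in the hyperplane $(\fg^{\sharp})'_{-1}$.
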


Note that the (classical) momentum associated to $\xi \in \fg^{\sharp}$ at 
$[\psi] \in \bP(\cH^{\infty})$ is precisely the corresponding (quantum mechanical) expectation of the 
self-adjoint operator (observable) $i\dd\rho(\xi)$ in the state $[\psi]$. 

%

Sections \ref{Section1} and \ref{Section2} of this paper are concerned with the proof of
Theorem~\ref{Thmhamaction}.
In Section~\ref{Section1}, we show in detail that $\bP(\cH^{\infty})$ is a locally convex, prequantisable K\"ahler manifold, and 
in Section~\ref{Section2}, we use this to show that
%
the action of $G^{\sharp}$ on $\bP(\cH^{\infty})$ is Hamiltonian, and lifts
to the prequantum line bundle $\bL(\cH^{\infty}) \rightarrow \bP(\cH^{\infty})$.  

In the second half of this paper, we give some applications of this symplectic picture 
to projective representations. 
In Section~\ref{Section3}, we calculate the Kostant--Souriau cocycles 
associated to the Hamiltonian action, and show that these are precisely the 
Lie algebra cocycles that one canonically obtains from a 
projective unitary representation and a smooth ray, cf.\  \cite{JN15}.
We then prove an integrality result
for characters of the stabiliser group that 
one obtains as the image of the momentum map. 
Finally, in Section~\ref{sec:diffeosmoothness},
we close with some remarks on smoothness of the action in the context of diffeological spaces.

Momentum maps have been 
introduced into representation theory by Normal Wildberger \cite{Wi92}.  
Studying the image of the momentum map has proven to be 
an extremely powerful tool in the analysis of 
unitary representations, in particular 
to obtain information on upper and lower bounds of 
spectra  (\cite{AL92}, \cite{Ne95, Ne00, Ne10a}). 
Smoothness properties of the linear Hamiltonian action on the space 
$\cH^\infty$ of smooth vectors of a unitary representation and the corresponding 
momentum map $\mu_\psi(\xi) := \la \psi, \dd\pi(\xi) \psi \ra$
have been studied by P.~Michor in the context of convenient calculus in~\cite{Mi90}. 

\section{The locally convex symplectic space $\bP(V)$}\label{Section1}
In order to equip $\bP(\cH^{\infty})$ with a symplectic structure,
we need to consider it as a locally convex manifold (in the sense of \cite[Def.\ 9.1]{Mi80}).
Later on, it will be important to choose a locally convex topology on $\cH^{\infty}$
that is compatible with the group action, but for now, it suffices if the scalar product 
on $\cH^{\infty} \subseteq \cH$ is continuous.
 We will go through the standard constructions of projective 
 geometry, using only a complex, locally convex space $V$
with continuous hermitian scalar product 
 $\lra{\,\cdot\,}{\,\cdot\,}\colon V \times V \rightarrow \C$ 
(antilinear in the first and linear in the second argument).

\begin{Proposition} 
The projective space $\bP(V)$ is a complex manifold modelled on 
locally convex spaces. The tautological line bundle 
$\mathbb{L}(V) \rightarrow \bP(V)$ is a locally convex, holomorphic 
line bundle over $\bP(V)$.
\end{Proposition}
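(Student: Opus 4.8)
The plan is to build a holomorphic atlas on $\bP(V)$ out of orthogonal-complement charts indexed by unit vectors, and then to trivialise $\mathbb{L}(V)$ over the same charts. First I would fix, for each $\psi_0 \in V$ with $\lra{\psi_0}{\psi_0} = 1$, the open set $U_{\psi_0} := \{[\psi] \in \bP(V) \;;\; \lra{\psi_0}{\psi} \neq 0\}$. These cover $\bP(V)$, since $[\psi] \in U_{\psi_0}$ for $\psi_0 := \psi/\sqrt{\lra{\psi}{\psi}}$. The continuity of the scalar product makes the map $P_{\psi_0} \colon v \mapsto \lra{\psi_0}{v}\,\psi_0$ a continuous idempotent with range $\C\psi_0$ and kernel $\psi_0^{\perp}$, so that $V = \C\psi_0 \oplus \psi_0^{\perp}$ as a topological direct sum and $\psi_0^{\perp}$ is a closed, complemented, locally convex space. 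I would then define the chart
\[
\varphi_{\psi_0} \colon U_{\psi_0} \rightarrow \psi_0^{\perp}, \qquad
\varphi_{\psi_0}([\psi]) = \frac{\psi}{\lra{\psi_0}{\psi}} - \psi_0 ,
\]
whose image lies in $\psi_0^{\perp}$ because $\lra{\psi_0}{\psi/\lra{\psi_0}{\psi}} = 1 = \lra{\psi_0}{\psi_0}$, with inverse $w \mapsto [\psi_0 + w]$. These are mutually inverse bijections onto the model space $\psi_0^{\perp}$.

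The second step is to verify that the transition maps are biholomorphic. Writing $\psi = \psi_0 + w$ with $w \in \psi_0^{\perp}$, one computes
\[
(\varphi_{\psi_1} \circ \varphi_{\psi_0}^{-1})(w)
= \frac{\psi_0 + w}{\lra{\psi_1}{\psi_0} + \lra{\psi_1}{w}} - \psi_1 ,
\]
defined on the open set where the denominator is nonzero. This is assembled from the continuous affine maps $w \mapsto \psi_0 + w$ and $w \mapsto \lra{\psi_1}{\psi_0} + \lra{\psi_1}{w}$, the scalar inversion $z \mapsto z^{-1}$ on $\C^{\times}$, and scalar multiplication; since each of these is holomorphic and holomorphy in the locally convex sense is stable under composition, the transition map is holomorphic, and by symmetry so is its inverse. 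The only genuinely analytic input is the choice of a notion of holomorphic map between open subsets of locally convex spaces (complex-differentiable with continuous, complex-linear derivative) that contains the continuous linear maps and the inversion map and is closed under composition; everything else is formal. This establishes that $\bP(V)$ is a complex manifold modelled on the spaces $\psi_0^{\perp}$.

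Finally I would realise $\mathbb{L}(V) = \{([\psi],v) \in \bP(V) \times V \;;\; v \in \C\psi\}$ as a holomorphic line bundle using the same cover. Over $U_{\psi_0}$ the assignment $[\psi] \mapsto \psi/\lra{\psi_0}{\psi}$ is a nonvanishing holomorphic frame, giving the trivialisation $\Phi_{\psi_0}([\psi],v) = ([\psi], \lra{\psi_0}{v})$ with fibrewise-linear inverse $([\psi],\lambda) \mapsto ([\psi], \lambda\,\psi/\lra{\psi_0}{\psi})$. The resulting transition functions are $g_{\psi_1 \psi_0}([\psi]) = \lra{\psi_1}{\psi}/\lra{\psi_0}{\psi}$, which in the chart $\varphi_{\psi_0}$ read $w \mapsto \lra{\psi_1}{\psi_0} + \lra{\psi_1}{w}$; these are holomorphic and $\C^{\times}$-valued on the overlaps, so $\mathbb{L}(V)$ is a holomorphic, locally convex line bundle.

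I expect the main obstacle to be bookkeeping rather than depth: confirming that the locally convex holomorphic calculus being used indeed makes scalar inversion and the fibrewise trivialisations holomorphic, and that the charts are compatible even though distinct $\psi_0^{\perp}$ may be non-isomorphic model spaces — a point that the continuous topological splitting $V = \C\psi_0 \oplus \psi_0^{\perp}$ resolves. The continuity of $\lra{\,\cdot\,}{\,\cdot\,}$ is exactly what is needed to make the projections $P_{\psi_0}$, the functionals $\lra{\psi_0}{\cdot}$, and hence all of the maps above continuous.
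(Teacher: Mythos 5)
Your proposal is correct and follows essentially the same route as the paper: the same cover $U_{[\psi_0]}$, the same affine charts $[\psi]\mapsto \lra{\psi_0}{\psi}^{-1}\psi-\psi_0$ valued in the hyperplane $\psi_0^{\perp}$, the same transition maps, and the same $\C^{\times}$-valued clutching functions $\lra{\psi_1}{\psi_0+w}$ for the tautological bundle (the paper merely packages the trivialisations as charts $\Lambda_{\psi}(v,z)=z(\psi+v)$ rather than as local frames). Your explicit observation that the continuous projection onto $\C\psi_0$ splits $V$ topologically, so that the model spaces $\psi_0^{\perp}$ are genuinely complemented locally convex spaces, is a point the paper leaves implicit.
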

\begin{proof}
We equip $\bP(V)$ with the Hausdorff topology 
induced by the quotient 
map $V - \{0\} \rightarrow \bP(V)$. 
The open neighbourhood
\be
U_{[\psi]} := \{[\chi] \in \bP(V)\,;\, \langle \psi, \chi \rangle \neq 0\}
\ee
is then charted by the hyperplane 
\begin{equation}
T_{[\psi]} := \{v \in V\,;\, \langle \psi, v \rangle = 0\}\,,
\end{equation}
and the chart $\kappa_{\psi} \colon U_{[\psi]} \rightarrow T_{[\psi]}$, 
defined by $\kappa_{\psi}([\chi]) := \langle\psi, \chi \rangle^{-1}\chi - \psi$ 
(cf.~\cite[\S V.1]{Ne01}).
Note that the map $\kappa_{\psi}$ depends on the choice of representative
$\psi \in [\psi]$, which we will assume to be of unit length. 
The inverse chart is $\kappa_{\psi}^{-1}(v) = [\psi + v]$.
We have $\kappa_{z\psi}([\chi]) = z \kappa_{\psi}([\chi])$ for $z \in \U(1)$.
More generally, the transition function 
$\kappa_{\psi\psi'} \colon \kappa_{\psi}(U_{[\psi]} \cap U_{[\psi']}) \rightarrow \kappa_{\psi'}(U_{[\psi]} \cap U_{[\psi']})$ 
is given by
$v \mapsto \frac{\psi + v}{\langle\psi', \psi + v\rangle} - \psi'$.
Since $v \mapsto \langle\psi', \psi + v\rangle$ is continuous and nonzero on 
$\kappa_{\psi}(U_{[\psi]} \cap U_{[\psi']})$, the transition functions are 
holomorphic, making 
$\bP(V)$ into a complex manifold.

For $\mathbb{L}(V) = V - \{0\}$, define the charts 
$\Lambda_{\psi} \colon T_{[\psi]} \times \C \rightarrow \mathbb{L}(V)$
by $\Lambda_{\psi} (v,z) = z(\psi + v)$.
The transition functions 
\[\Lambda_{\psi\psi'} \colon \kappa_{\psi}(U_{[\psi]} \cap U_{[\psi']})\times \C
\rightarrow \kappa_{\psi'}(U_{[\psi]} \cap U_{[\psi']})\times \C\] 
are given by
$(v,z) \mapsto (\kappa_{\psi\psi'}(v),\lra{\psi'}{\psi + v}z)$. Since these are 
holomorphic isomorphisms of trivial locally convex line bundles, 
the result follows. 
\end{proof}

For $\|\psi\| = 1$, we 
identify the tangent vectors $\delta v, \delta w \in T_{[\psi]} \bP(V)$ at the point 
$[\psi] \in \bP(V)$ with their coordinates $\delta v, \delta w \in T_{[\psi]}$ 
by the tangent map $T_{[\psi]}(\kappa_\psi)$. 
Accordingly, we define the Hermitean form $H$ on $\bP(V)$ by
\be \label{Hermitean}
H_{[\psi]}(\delta v, \delta w) := 2\langle \delta v, \delta w \rangle\,.
\ee
Note that this does not depend on the choice of chart $\kappa_{\psi}$.

\begin{Proposition}
Equipped with the Hermitean forms $H_{[\psi]}$ 
of equation \eqref{Hermitean}, $\bP(V)$ is a Hermitean manifold.
\end{Proposition}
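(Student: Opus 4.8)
The plan is to check the two defining features of a Hermitean manifold separately: that each $H_{[\psi]}$ is a continuous Hermitean form on $T_{[\psi]}\bP(V)$, and that the field $[\psi] \mapsto H_{[\psi]}$ is smooth. The pointwise properties come essentially for free from the ambient structure. Under the identification of $T_{[\psi]}\bP(V)$ with the complex hyperplane $T_{[\psi]} = \{v \in V \,;\, \lra{\psi}{v} = 0\}$, the form $H_{[\psi]}$ is twice the restriction of $\lra{\,\cdot\,}{\,\cdot\,}$; it is therefore sesquilinear with $\overline{H_{[\psi]}(\delta w,\delta v)} = H_{[\psi]}(\delta v,\delta w)$, continuous because $\lra{\,\cdot\,}{\,\cdot\,}$ is assumed continuous, positive since $H_{[\psi]}(\delta v,\delta v) = 2\lra{\delta v}{\delta v}$, and automatically compatible with the complex structure $J$ (multiplication by $i$ in the chart), as a sesquilinear form satisfies $H_{[\psi]}(i\delta v, i\delta w) = H_{[\psi]}(\delta v,\delta w)$. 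Independence of the chosen unit representative is the remark following \eqref{Hermitean}, via $\kappa_{z\psi} = z\kappa_{\psi}$ for $z \in \U(1)$.

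The real content is smoothness, and the obstacle is that $H_{[\chi]}$ is defined through the \emph{centered} chart $\kappa_{\chi}$ attached to a unit representative of $[\chi]$, whereas smoothness must be tested in a single fixed chart $\kappa_{\psi}$. First I would fix a unit vector $\psi$ and, for a point $[\chi] = [\psi + v]$ with $v \in T_{[\psi]}$, express $H_{[\chi]}$ in the coordinates of $\kappa_{\psi}$. This requires the derivative at $v$ of the transition function $\kappa_{\psi\chi} = \kappa_{\chi} \circ \kappa_{\psi}^{-1}$, for which the explicit formula of the preceding Proposition is available. Writing $N := \|\psi + v\| = (1 + \|v\|^2)^{1/2}$ and taking $\chi = N^{-1}(\psi + v)$, so that $\lra{\chi}{\psi + v} = N$, a short differentiation gives $D\kappa_{\psi\chi}(v)\delta a = N^{-1}\bigl(\delta a - \lra{\chi}{\delta a}\chi\bigr) = N^{-1}P_{\chi}(\delta a)$, where $P_{\chi}$ is the orthogonal projection onto $T_{[\chi]} = \chi^{\perp}$. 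This single linear-algebra step is the crux; the remainder is substitution.

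Inserting this into $H_{[\chi]}(\delta a,\delta b) = 2\lra{D\kappa_{\psi\chi}(v)\delta a}{D\kappa_{\psi\chi}(v)\delta b}$ and using $\lra{\chi}{\delta a} = N^{-1}\lra{v}{\delta a}$ for $\delta a \in T_{[\psi]}$, I expect to recover the Fubini--Study form in the chart $\kappa_{\psi}$:
\[
H_{[\psi+v]}(\delta a, \delta b) = \frac{2}{1+\|v\|^2}\lra{\delta a}{\delta b} - \frac{2}{(1+\|v\|^2)^2}\lra{\delta a}{v}\lra{v}{\delta b}\,.
\]
Smoothness then follows at once: the scalar product is a continuous, hence smooth, sesquilinear map; $v \mapsto 1 + \|v\|^2$ is smooth and everywhere positive, so $v \mapsto (1+\|v\|^2)^{-1}$ and $v \mapsto (1+\|v\|^2)^{-2}$ are smooth; and therefore $v$ maps smoothly into the space of continuous Hermitean forms on the model space $T_{[\psi]}$.

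The hard part is not the algebra but the locally convex bookkeeping around it. I would need to verify that $D\kappa_{\psi\chi}(v)$ exists and is computed in the differential calculus used for these manifolds, and, more importantly, that the displayed expression defines a genuinely \emph{smooth} section of the bundle of continuous sesquilinear forms rather than merely a pointwise family. Continuity of $\lra{\,\cdot\,}{\,\cdot\,}$ is what controls both terms on the chart domain, after which smoothness of $H$ reduces to the evident smoothness of the two scalar coefficients.
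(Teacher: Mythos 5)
Your proposal is correct and follows essentially the same route as the paper: both compute the derivative $D_v\kappa_{\psi\psi'}(\delta a) = \|\psi+v\|^{-1}\bigl(\delta a - \lra{\psi'}{\delta a}\psi'\bigr)$ of the transition function and use it to express $H$ in the fixed chart $\kappa_\psi$ as $\frac{2}{1+\|v\|^2}\lra{\delta a}{\delta b} - \frac{2}{(1+\|v\|^2)^2}\lra{\delta a}{v}\lra{v}{\delta b}$, whose smoothness is then evident from continuity of the scalar product. The only difference is that you spell out the pointwise Hermitean properties and the substitution in more detail than the paper, which simply notes that compatibility with $J$ is clear and states the local formula.
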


\begin{proof} As compatibility with the complex structure 
$J(\delta v) = i \delta v$ is clear, the only thing to show is that $H$
is smooth.
Using that the transition map 
\[ D_v\kappa_{\psi\psi'} \: T_{[\psi]} \to T_{[\psi']}, \quad \mbox{ for } \quad 
\psi' = \frac{\psi + v}{\|\psi + v\|} \] 
is given by 
\[D_{v}\kappa_{\psi, \psi'}(\delta v) = 
\textstyle
\frac{1}{\|\psi + v\|}\left(\delta v - \lra{\frac{\psi + v}{\|\psi + v\|}}{\delta v}
\frac{\psi + v}{\|\psi + v\|}\right)\,,\]
one sees that in local coordinates for $T^2\bP(V)$, 
the map $T_{[\psi]} \times T_{[\psi]} \times T_{[\psi]} \rightarrow \C$ is 
\be\label{localomega}
H_{v}(\delta v,\delta w) = 2 \left(\frac{1}{1 + \|v\|^2} \lra{\delta v}{\delta w} - 
\frac{1}{(1 + \|v\|^2)^2}\lra{\delta v}{v}
\lra{v}{\delta w} \right)\,,
\ee
which is evidently smooth. 
\end{proof}

As the real and imaginary parts of $H$, we obtain the Fubini--Study metric 
\[G_{[\psi]}(\delta v , \delta w) = 2\mathrm{Re}\lra{\delta v}{\delta w}\]
and the 2-form 
\be\label{sympform}
\Omega_{[\psi]}(\delta v,\delta w) = 2\mathrm{Im}\langle \delta v, \delta w \rangle\,.
\ee

The 2-form $\Omega$ is nondegenerate in the `weak' sense that 
$\Omega(\delta v, \delta w) = 0$ for all $\delta w$ implies $\delta v = 0$.
In order to show that $\Omega$ is a symplectic form, and hence that $\bP(V)$ is K\"ahler, 
it thus suffices to prove that it is closed. We will do this by showing that $\Omega$
is the curvature of a prequantum bundle.

\begin{Proposition}
The sphere $\bS(V) = \{\psi \in V \,;\, \|\psi\| = 1 \}$ is a locally convex manifold, and the projection 
$\bS(V) \rightarrow \bP(V)$ is a principal $\U(1)$-bundle.
\end{Proposition}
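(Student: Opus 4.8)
The plan is to exploit the free $\U(1)$-action on $\bS(V)$ given by scalar multiplication, whose orbits are exactly the fibres of the projection $\pi\colon\bS(V)\to\bP(V)$, and to build both the manifold structure and the principal bundle structure from a single family of explicit local trivialisations indexed by unit vectors $\psi$. For such a $\psi$ I would define $\Phi_\psi\colon T_{[\psi]}\times\U(1)\to\bS(V)$ by $\Phi_\psi(v,z)=z(\psi+v)/\|\psi+v\|$; since $\langle\psi,\psi+v\rangle=1\neq 0$ forces $\psi+v\neq 0$, this is well defined, and its image is precisely $\pi^{-1}(U_{[\psi]})$.

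First I would check that $\Phi_\psi$ is a bijection onto $\pi^{-1}(U_{[\psi]})$ by writing down its inverse. Given a unit vector $\chi$ with $\langle\psi,\chi\rangle\neq 0$, one recovers $v=\langle\psi,\chi\rangle^{-1}\chi-\psi=\kappa_\psi([\chi])\in T_{[\psi]}$ and the phase $z=\langle\psi,\chi\rangle/|\langle\psi,\chi\rangle|\in\U(1)$, and a short computation shows that $\Phi_\psi$ and this assignment are mutually inverse. Equipping $\bS(V)$ with the subspace topology from the (Hausdorff) space $V$ makes $\pi^{-1}(U_{[\psi]})$ open, and both $\Phi_\psi$ and its inverse are continuous, so $\Phi_\psi$ is a homeomorphism onto it. Charting the $\U(1)$-factor by $\R$ then presents $\bS(V)$ locally on the locally convex space $T_{[\psi]}\oplus\R$.

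The heart of the argument is the transition between two such charts over $\pi^{-1}(U_{[\psi]}\cap U_{[\psi']})$. Comparing $\Phi_\psi(v,z)=\Phi_{\psi'}(v',z')$ and using the inverse formulas yields $v'=\kappa_{\psi\psi'}(v)$ together with $z'=g_{\psi\psi'}(v)\,z$, where $g_{\psi\psi'}(v)=\langle\psi',\psi+v\rangle/|\langle\psi',\psi+v\rangle|\in\U(1)$. The base component $\kappa_{\psi\psi'}$ was already shown to be holomorphic, while $g_{\psi\psi'}$ is smooth because $v\mapsto\langle\psi',\psi+v\rangle$ is continuous affine and nonvanishing on the overlap and $w\mapsto w/|w|$ is smooth on $\C^{*}$. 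Hence the transition maps are smooth and $\bS(V)$ is a locally convex manifold. Moreover the $\U(1)$-component of every transition is left multiplication by an element of $\U(1)$, the trivialisations $\Phi_\psi$ intertwine the scalar action on $\bS(V)$ with translation in the $\U(1)$-factor, and this action is free with orbits the fibres of $\pi$; therefore $\pi\colon\bS(V)\to\bP(V)$ is a principal $\U(1)$-bundle.

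I expect the only genuine subtlety to be smoothness in the locally convex category, and it reduces entirely to the standard fact that a continuous sesquilinear map is smooth. This makes $v\mapsto\|\psi+v\|^{2}=\langle\psi+v,\psi+v\rangle$ a smooth map into $\R_{>0}$, and composing with the smooth scalar functions $t\mapsto t^{-1/2}$ and $w\mapsto w/|w|$ delivers smoothness of $\Phi_\psi$ as a $V$-valued map, of its inverse, and of all transition cocycles $g_{\psi\psi'}$ — so the structure obtained is genuinely smooth, not merely separately smooth.
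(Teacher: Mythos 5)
Your proof is correct, and its key formulas are the ones the paper itself uses: your $\Phi_\psi$ is exactly the paper's local trivialisation $\tau_\psi(v,z)=z\frac{\psi+v}{\|\psi+v\|}$, and your cocycle $g_{\psi\psi'}(v)=\langle\psi',\psi+v\rangle/|\langle\psi',\psi+v\rangle|$ is its clutching function. Where you genuinely differ is in how the manifold structure on $\bS(V)$ is produced. The paper first makes $\bS(V)$ a manifold in its own right, using the real-hyperplane charts $\kappa_\psi\colon U_\psi\to T_\psi$ with $U_\psi=\{\chi\,;\,\mathrm{Re}\langle\psi,\chi\rangle>0\}$, $T_\psi=\{v\,;\,\mathrm{Re}\langle\psi,v\rangle=0\}$ and $\kappa_\psi(\chi)=(\mathrm{Re}\langle\psi,\chi\rangle)^{-1}\chi-\psi$, and only afterwards exhibits the $\U(1)$-trivialisations; you instead let the trivialisations $\Phi_\psi$ do double duty as charts modelled on $T_{[\psi]}\oplus\R$. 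Your route is more economical and makes the principal bundle structure manifest from the outset, since the transition maps visibly preserve the product decomposition and act on the fibre by left multiplication by a smooth $\U(1)$-valued function. What the paper's choice buys is the canonical identification $T_\psi\bS(V)\simeq\{v\,;\,\mathrm{Re}\langle\psi,v\rangle=0\}$ as a subspace of $V$, which is used immediately afterwards to write the connection $1$-form as $\alpha_\psi(\delta v)=-i\langle\psi,\delta v\rangle$ and to compute its curvature; with your atlas that (isomorphic, but not identical) identification would have to be inserted by hand. Both atlases induce the subspace topology and are smoothly compatible, so the resulting structures agree, and your reduction of all smoothness questions to the smoothness of the continuous scalar product together with $t\mapsto t^{-1/2}$ and $w\mapsto w/|w|$ is exactly the right point in the locally convex setting.
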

\begin{proof}
The sphere inherits the Hausdorff topology from its inclusion in $V$. 
The locally convex space 
\be\label{schart}
T_{\psi} := \{v \in V\,;\, \mathrm{Re}\langle \psi, v \rangle = 0\} \subseteq V
\ee
can be naturally identified with the open neighbourhood 
\be
U_{\psi} := \{\chi \in \bS(V) \,;\, \mathrm{Re}\langle \psi, \chi\rangle > 0\}
\ee
of $\psi$ 
by the chart $\kappa \colon U_{\psi} \rightarrow T_{\psi}$ with 
$\kappa_{\psi}(\chi) = (\mathrm{Re} \langle \psi,\chi \rangle)^{-1}\chi - \psi$,
which has inverse $\kappa_{\psi}^{-1}(v) = \frac{\psi + v}{\|\psi + v\|}$.
If $\psi$ and $\psi'$ are not antipodal, then the 
transition $U_{\psi\psi'}:= U_{\psi} \cap U_{\psi'}$ is nonempty, and 
the transition function 
$\kappa_{\psi}(U_{\psi \psi'}) \rightarrow \kappa_{\psi'}(U_{\psi \psi'})$
is given by $v \mapsto \frac{\psi + v}{\mathrm{Re} \langle\psi', \psi + v\rangle} - \psi'$.
This is continuous for the strong topology that $T_{\psi}$ and $T_{\psi'}$ inherit from
$V$ because the scalar product $\langle\,\cdot\,, \,\cdot \,\rangle$ is 
continuous, and $\mathrm{Re} \langle\psi', \psi + v\rangle$ is nonzero on 
$\kappa_{\psi}(U_{\psi} \cap U_{\psi'})$.
In particular, the tangent space
$T_{\psi} \bS(V)$ can be canonically identified with $T_{\psi}$.

The canonical projection $\bS(V) \rightarrow \bP(V)$ is a smooth principal
$\U(1)$-bundle, with local trivialisation $\tau_{\psi} \colon T_{[\psi]} \times \U(1) \rightarrow \bS(V)$
given by $\tau_{\psi}(v, z) := z \frac{\psi + v}{\|\psi + v\|}$. (Note that this depends on the representative $\psi$
of $[\psi]$.)
For $[\chi]$ in its image $U_{\psi} \cup U_{-\psi} \cup U_{i\psi} \cup U_{-i\psi}$,
we have $z_{\psi}([\chi]) = \frac{\langle\psi, \chi\rangle}{|\langle\psi, \chi\rangle|}$
and $z_{\psi'}([\chi]) = \frac{\langle\psi', \chi\rangle}{|\langle\psi', \chi\rangle|}$,
so the clutching 
functions $g_{\psi\psi'} \colon U_{[\psi]} \cap U_{[\psi']}\rightarrow \T$ are 
\[g_{\psi\psi'}([\chi]) = \frac{\langle\psi', \chi\rangle}{\langle\psi, \chi\rangle}\Big/ 
\left|\frac{\langle\psi', \chi\rangle}{\langle\psi, \chi\rangle}\right|\,.
\qedhere\]
\end{proof}

Identifying $T_{\psi}\bS(V)$ with $T_{\psi}$ in \eqref{schart}, 
we define the 1-form $\alpha$ on $\bS(V)$ by
\be\label{sconn}
\alpha_{\psi}(\delta v) = -i\lra{\psi}{\delta v}\,.
\ee
\begin{Proposition}
The form $\alpha$ is a connection 1-form on $\bS(V) \rightarrow \bP(V)$
with curvature $\Omega$.
\end{Proposition}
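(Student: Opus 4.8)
The plan is to verify the two defining properties of a connection 1-form—equivariance under the principal $\U(1)$-action and normalization on the fundamental vertical vector field—and then compute the exterior derivative to identify the curvature with $\Omega$. First I would establish the vertical condition. The $\U(1)$-action on $\bS(V)$ is $z \cdot \psi = z\psi$, so the fundamental vector field generated by $it \in i\R = \mathrm{Lie}(\U(1))$ at the point $\psi$ is $\frac{d}{dt}\big|_{0}(e^{it}\psi) = i\psi$, which indeed lies in $T_{\psi} = \{\delta v \,;\, \Re\lra{\psi}{\delta v} = 0\}$ since $\Re\lra{\psi}{i\psi} = \Re(i\lra{\psi}{\psi}) = 0$. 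Evaluating the form on this generator gives $\alpha_{\psi}(i\psi) = -i\lra{\psi}{i\psi} = -i \cdot i\|\psi\|^2 = 1$, which is the correct normalization identifying the generator $it$ with $t$ under the standard convention. For equivariance I would check $R_z^*\alpha = \alpha$ where $R_z(\psi) = z\psi$: since the action is by scalar multiplication and $\lra{\,\cdot\,}{\,\cdot\,}$ is Hermitian, one gets $\lra{z\psi}{z\,\delta v} = \bar z z \lra{\psi}{\delta v} = \lra{\psi}{\delta v}$, so the form is manifestly invariant under the abelian structure group.

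Next I would compute the curvature. By definition the curvature of a principal connection is the horizontal component of $d\alpha$, but since the base is $\bP(V)$ and $\Omega$ is a form on the base, it suffices to show that $d\alpha$ equals the pullback of $\Omega$ under the bundle projection $\pi \colon \bS(V) \rightarrow \bP(V)$, evaluated on horizontal (or arbitrary) vectors. Working in the chart $\kappa_{\psi}^{-1}(v) = \frac{\psi+v}{\|\psi+v\|}$ from the preceding proposition, or more directly using the tangent identification $T_{\psi}\bS(V) \simeq T_{\psi}$, I would extend tangent vectors to vector fields and apply the intrinsic formula
\[
d\alpha(X,Y) = X(\alpha(Y)) - Y(\alpha(X)) - \alpha([X,Y])\,.
\]
The most efficient route is to differentiate $\alpha_{\psi}(\delta v) = -i\lra{\psi}{\delta v}$ along a curve in $\psi$: taking two tangent directions $\delta v, \delta w$ and computing the antisymmetrized derivative of $\psi \mapsto -i\lra{\psi}{\delta w}$ in the direction $\delta v$ yields a term $-i\lra{\delta v}{\delta w}$ minus its transpose $-i\lra{\delta w}{\delta v}$, giving $-i(\lra{\delta v}{\delta w} - \ol{\lra{\delta v}{\delta w}}) = -i \cdot 2i\,\Im\lra{\delta v}{\delta w} = 2\Im\lra{\delta v}{\delta w}$, which is exactly $\Omega_{[\psi]}(\delta v,\delta w)$ from \eqref{sympform}. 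The bracket and connection-on-bracket terms contribute nothing to the horizontal curvature once one restricts to horizontal lifts, since those are tangent to the sphere and the extra scalar-valued pieces cancel against $\alpha([X,Y])$.

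The main obstacle I anticipate is bookkeeping rather than conceptual: one must carefully track the distinction between differentiating \emph{on the sphere} (where $\|\psi\|$ is constrained to be $1$) versus in the ambient $V$, and handle the $\|\psi+v\|^{-1}$ normalization factors that appear in the chart inverse, exactly as in the computation of $H_v$ in \eqref{localomega}. A clean way to sidestep the normalization is to work infinitesimally at a point $\psi$ with $\|\psi\|=1$ where the first-order variation of $\|\psi+v\|$ vanishes at $v=0$, so that the normalization contributes only second-order terms that do not affect $d\alpha$ at $\psi$; combined with $\U(1)$-equivariance and the homogeneity of $\bS(V)$ under the unitary group, verifying the curvature identity at a single base point suffices to establish it everywhere. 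Once $d\alpha = \pi^*\Omega$ is confirmed, closedness of $\Omega$ follows immediately from $d^2\alpha = 0$, completing the argument that $\Omega$ is symplectic and $\bP(V)$ is K\"ahler.
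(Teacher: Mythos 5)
Your proposal follows essentially the same route as the paper: verify $\U(1)$-invariance and the normalization $\alpha_\psi(i\psi)=1$ on the fundamental vertical field, then compute $d\alpha$ with constant vector-field extensions at $v=0$ (where the bracket term vanishes and the $\|\psi+v\|^{-1}$ normalization contributes only to second order) to obtain $2\Im\lra{\delta v}{\delta w}$, matching the local expression for $\Omega$. Those steps are all correct, and your remark that it suffices to check the curvature identity at the chart center is exactly what the paper does --- though you do not need homogeneity of $\bS(V)$ under the unitary group for this (which is delicate for a general locally convex $V$ with continuous scalar product): every point of $\bS(V)$ is the center of its own chart $\kappa_\psi$, so the computation at $v=0$ already covers all base points.

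The one item you omit that the paper treats as a genuine part of the claim is the \emph{smoothness} of $\alpha$ as a 1-form on the locally convex manifold $\bS(V)$. In this setting that is not automatic from the formula $\alpha_\psi(\delta v) = -i\lra{\psi}{\delta v}$, because the identification $T_\psi\bS(V)\simeq T_\psi$ is chart-dependent; the paper computes the derivative $D_v\kappa_{\psi\psi'}$ of the transition function and shows that in the chart around $\psi$ the form is represented by $(v,\delta v)\mapsto \frac{1}{1+\|v\|^2}\Im\lra{v}{\delta v}$, which is visibly smooth since the scalar product is continuous. You should add this verification (or at least the chart computation) before asserting that $\alpha$ is a connection 1-form; the rest of your argument then goes through as written.
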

\begin{proof}
We start by showing that $\alpha$ is smooth.
Using the derivative 
\[D_{v}\kappa_{\psi\psi'}(\delta v) 
= \frac{1}{\|\psi + v\|}
\Big(\delta_v - \frac{\Re \la v, \delta v \ra}{1 + \|v\|^2}(\psi + v)\Big)
= \frac{\delta v}{\|\psi + v\|}
-
\frac{\mathrm{Re}\lra{v}{\delta v}}{\|\psi + v\|^3}(\psi + v)
\]
for the transition function with $\psi' = \frac{\psi + v}{\|\psi + v\|}$, 
one sees that $\alpha$ is represented by the function $T_{\psi} \times T_{\psi} \rightarrow \R$
given by $(v,\delta v) \mapsto \frac{1}{1 + \|v\|^2}\mathrm{Im}\lra{v}{\delta v}$, which is evidently smooth.

If we identify $T_{\psi}\bS(V) $ with $ T_{\psi}$ and $T_{z\psi}\bS(V) $ with $ T_{z\psi}$,
then the pushforward $R_{z*} \colon T_{\psi} \rightarrow T_{z\psi}$ of the 
$\U(1)$-action is
$R_{z*}(\delta v) = z \delta v$.  
It follows that $\alpha$ is 
$\U(1)$-invariant, 
\[(R_{z}^*\alpha_{\psi})(\delta v) = \alpha_{z\psi}(z\delta v) = -i\langle z\psi, z\delta v\rangle =
 -i\langle \psi, \delta v\rangle = \alpha_{\psi}(\delta v),\]
and since the vector field $X_1$ generated by the $\U(1)$-action on $\bS(V)$ is 
$X_{1}(\psi) = \frac{d}{dt}\big|_{t =0} e^{it}\psi = i\psi$, we have 
$\alpha_{\psi} (X_{1}(\psi)) = 1$, so that $\alpha$ is a principal connection 1-form on 
$\bS(V) \rightarrow \bP(V)$.
If we introduce the constant vector fields $\delta v$ and $\delta w$ on $U_{\psi} \subseteq \bS(V)$,
then at $v=0$, we have
\be
d\alpha_{v}(\delta v, \delta w) = L_{\delta v}\alpha_{v}(\delta w) - L_{\delta v}\alpha_{v}(\delta w)
= 2\mathrm{Im}\lra{\delta v}{\delta w}\,,
\ee
which agrees with the local expression 
\eqref{localomega} for $\Omega_{[\psi]}(\delta v,\delta w)$ at $v=0$, as required. 
\end{proof}

In particular, $\Omega$ is closed, so $\bP(V)$ is a K\"ahler manifold.
Since the tautological line bundle is associated to 
$\bS(V)$ in the sense that 
$\mathbb{L}(V) := \bS(V) \times_{\U(1)}\C$, we have the following result 
(see also \cite{Ne01}). 

\begin{Theorem}\label{Thm:projspace}
The projective space $\bP(V)$ with Hermitean form $H$ 
is a locally convex K\"ahler manifold. 
The tautological bundle $\mathbb{L}(V) \rightarrow \bP(V)$, equipped with the connection 
inherited from the $\U(1)$-principal 1-form $\alpha$, 
is a prequantum line bundle for the corresponding symplectic 
form $\Omega$. 
\end{Theorem}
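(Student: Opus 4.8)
The plan is to assemble the preceding propositions rather than to compute anything new, since each ingredient of a prequantisable Kähler structure has already been verified separately. Recall that $\bP(V)$ is known to be a complex manifold modelled on locally convex spaces, that $H$ is a smooth Hermitean form compatible with the complex structure $J(\delta v) = i\delta v$, and that $\alpha$ is a principal $\U(1)$-connection on $\bS(V) \rightarrow \bP(V)$ whose curvature is the $2$-form $\Omega = \Im H$. The only Kähler property still to be established is that $\Omega$ is closed, as weak nondegeneracy and $J$-compatibility are already in place; so first I would dispatch closedness, and then turn to the prequantisation statement.

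To show $d\Omega = 0$, I would exploit that $\Omega$ is a curvature form. Writing $\pi \colon \bS(V) \rightarrow \bP(V)$ for the bundle projection, the defining relation between a principal connection and its curvature gives $\pi^*\Omega = d\alpha$, the bracket term vanishing because $\U(1)$ is abelian. Applying $d$ and using $d \circ d = 0$ yields $\pi^*(d\Omega) = 0$. Since $\pi$ is a surjective submersion of locally convex manifolds, $\pi^*$ is injective on differential forms, so $d\Omega = 0$. Together with the earlier propositions this makes $\bP(V)$ a locally convex Kähler manifold, as $\Omega = \Im H$ is then a closed, weakly nondegenerate, $J$-compatible $2$-form.

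For the prequantisation claim, I would use the associated-bundle description $\mathbb{L}(V) = \bS(V) \times_{\U(1)} \C$ along the canonical representation $\U(1) \rightarrow \mathrm{GL}(\C)$. A principal connection on $\bS(V)$ induces a linear connection $\nabla$ on every associated bundle, and the curvature $R_\nabla$ of the induced connection is obtained from the curvature of $\alpha$ by applying the derivative of the defining representation; for the canonical representation this derivative is the identification $\mathrm{Lie}(\U(1)) = i\R \hookrightarrow \C$ under which $\alpha$ already takes values, so $R_\nabla = \Omega$. Hence $(\mathbb{L}(V),\nabla)$ is a prequantum line bundle for $(\bP(V),\Omega)$. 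All of these steps are the standard finite-dimensional arguments, so the one point genuinely worth checking — and the expected locus of difficulty, such as it is — is that each survives in the locally convex category: the injectivity of $\pi^*$ for submersions, the associated-bundle construction and its induced connection, and the compatibility of the two curvatures. These hold with the definitions used here (cf.\ \cite{Ne01}), so I expect the remaining work to be bookkeeping rather than a real obstruction.
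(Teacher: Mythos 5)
Your proposal matches the paper's own argument: the theorem is obtained there exactly by assembling the preceding propositions, with closedness of $\Omega$ deduced from its being the curvature of the principal connection $\alpha$ on $\bS(V)\to\bP(V)$ (the paper states this more tersely, leaving the injectivity of $\pi^*$ implicit), and the prequantisation claim following from $\bL(V)=\bS(V)\times_{\U(1)}\C$ with the induced connection. Your additional remarks on which standard facts need to survive in the locally convex category are sound and consistent with how the paper handles them via explicit local coordinates.
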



\section{Hamiltonian action of $G^{\sharp}$ on $\bP(\cH^{\infty})$}\label{Section2}
%


We return to the situation of a smooth, projective, unitary representation 
$\overline{\rho}$ of $G$, and the corresponding unitary representation $\rho$ of $G^{\sharp}$.
In order to obtain a Hamiltonian action of $G^{\sharp}$
on $\bP(\cH^{\infty}),$
we need a locally convex topology on $\cH^{\infty}$ 
that is compatible with with the $G^{\sharp}$-action. 
We will equip $\cH^{\infty}$ with the \emph{strong topology} of Definition~\ref{defstrong}.
As the scalar product $\lra{\,\cdot\,}{\,\cdot\,} \colon \cH^{\infty} \times \cH^{\infty} \rightarrow \C$
is manifestly continuous, Theorem~\ref{Thm:projspace} applies to $\bP(\cH^{\infty})$.

%

\begin{Proposition}\label{prop:sepsmooth}
The group action $G^{\sharp} \times \cH^{\infty} \rightarrow \cH^{\infty}$
is separately smooth for the strong topology.
\end{Proposition}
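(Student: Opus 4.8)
The plan is to verify the two requirements of separate smoothness separately: smoothness of each action map $\alpha_{g}=\rho(g)\colon\cH^{\infty}\to\cH^{\infty}$, and smoothness of each orbit map $\alpha_{\psi}\colon G^{\sharp}\to\cH^{\infty}$, $g\mapsto\rho(g)\psi$.

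\textbf{Action maps.} For fixed $g$ the map $\rho(g)$ is linear, so it suffices to show it is continuous for the strong topology, after which it is automatically smooth. The key input is the intertwining relation $\dd\rho(\xi)\rho(g)=\rho(g)\dd\rho(\mathrm{Ad}(g^{-1})\xi)$, which iterates to $\dd\rho_k(\bxi)\rho(g)=\rho(g)\dd\rho_k(\mathrm{Ad}(g^{-1})\bxi)$ on the $k$-fold products. Combined with unitarity of $\rho(g)$ this yields $p_B(\rho(g)\psi)=p_{B_g}(\psi)$, where $B_g:=\mathrm{Ad}(g^{-1})^{\times k}(B)\subseteq(\fg^{\sharp})^{k}$ is the image of $B$ under the diagonal adjoint action. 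Since $\mathrm{Ad}(g^{-1})$ is a continuous linear automorphism of $\fg^{\sharp}$ it maps bounded sets to bounded sets, so $B_g$ is bounded and $\psi\mapsto p_B(\rho(g)\psi)$ is a defining seminorm, hence continuous. Thus $\rho(g)$ is in fact a topological automorphism of $(\cH^{\infty},\text{strong})$, and in particular smooth.

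\textbf{Orbit maps.} First I would reduce smoothness of $\alpha_{\psi}$ everywhere to smoothness at the identity, via the identity $\alpha_{\psi}=\rho(g)\circ\alpha_{\psi}\circ\lambda_{g^{-1}}$, where $\lambda_{g^{-1}}$ is left translation by $g^{-1}$; by the previous part and smoothness of $\lambda_{g^{-1}}$ it is then enough to treat a neighbourhood of $e$. The structural observation driving the argument is that the left-invariant derivative of an orbit map is again an orbit map of a smooth vector,
\[
\tfrac{d}{dt}\big|_{t=0}\,\alpha_{\psi}(g\exp(t\eta))=\rho(g)\dd\rho(\eta)\psi=\alpha_{\dd\rho(\eta)\psi}(g),
\]
and iterating produces $\alpha_{\dd\rho_n(\boldsymbol\eta)\psi}$ with multilinear dependence on $\boldsymbol\eta$. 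Since each $\dd\rho_n(\boldsymbol\eta)\psi$ again lies in $\cH^{\infty}$, a bootstrap reduces the full statement to two base facts about the orbit map of an arbitrary smooth vector $\phi$: continuity of $\alpha_{\phi}$ into $(\cH^{\infty},\text{strong})$, and differentiability with derivative $(g,\eta)\mapsto\alpha_{\dd\rho(\eta)\phi}(g)$ depending continuously on its arguments.

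\textbf{The estimates.} Both base facts rest on one finiteness observation: for bounded $B\subseteq(\fg^{\sharp})^{k}$ and bounded $C\subseteq(\fg^{\sharp})^{m}$ one has $\sup_{\bxi\in B}p_C(\dd\rho_k(\bxi)\phi)=p_{C\times B}(\phi)<\infty$, so that $\{\dd\rho_k(\bxi)\phi\,;\,\bxi\in B\}$ is a \emph{bounded} subset of $(\cH^{\infty},\text{strong})$. Writing $\dd\rho_k(\bxi)\rho(g)\psi=\rho(g)\dd\rho_k(\mathrm{Ad}(g^{-1})\bxi)\psi$ as before, the increments $p_B((\rho(h)-\one)\phi)$ split into a term governed by $\mathrm{Ad}(h^{-1})\to\one$ and a term $\sup_{\bxi\in B}\|(\rho(h)-\one)\dd\rho_k(\bxi)\phi\|$. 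Along a one-parameter subgroup $h=\exp(t\eta)$ the fundamental theorem of calculus in $\cH$, together with unitarity, bounds the latter by $|t|\,\sup_{\bxi\in B}\|\dd\rho_{k+1}(\eta,\bxi)\phi\|\le|t|\,p_{\{\eta\}\times B}(\phi)$, which tends to $0$; the same calculus applied to the difference quotient identifies the derivative as $\dd\rho(\eta)\phi$ in the strong topology.

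\textbf{Main obstacle.} I expect the delicate point to be upgrading this curve-wise control to genuine continuity and joint continuity of the derivative in the topology of uniform convergence on bounded sets, that is, controlling the suprema over $B$ \emph{uniformly} as $g$ varies. The difficulty is precisely the entanglement of the group variable and the Lie-algebra variable through the adjoint action: the relation $p_B(\rho(g)\psi)=p_{\mathrm{Ad}(g^{-1})^{\times k}(B)}(\psi)$ transfers the $g$-dependence into the bounded set, and keeping the outcome bounded relies on smoothness of $\mathrm{Ad}$ and on its preservation of bounded sets. This is why the strong topology, built from bounded-set seminorms, is the right choice, and it is also the structural reason why one should expect only \emph{separate}, rather than joint, smoothness for Fr\'echet--Lie groups such as $\R^{\N}$.
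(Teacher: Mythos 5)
Your treatment of the action maps is exactly the paper's: for fixed $g$ the map $\rho(g)$ is linear, and the identity $p_B(\rho(g)\psi)=p_{\mathrm{Ad}_{g^{-1}}(B)}(\psi)$ together with the fact that $\mathrm{Ad}_{g^{-1}}$ preserves bounded sets gives continuity, hence smoothness. For the orbit maps, however, the paper does not argue from scratch: it observes that $g\mapsto\rho(g)\psi$ is smooth into $(\cH^{\infty},\|\cdot\|)$ by the very definition of a smooth vector, and then invokes \cite[Lemma 3.24]{JN15}, which says precisely that this norm-smoothness upgrades to smoothness for the strong topology. Your orbit-map section is in effect an attempt to reprove that lemma, and it correctly isolates the two structural inputs: that the derivatives of the orbit map are again orbit maps of smooth vectors $\dd\rho_n(\eta_1,\ldots,\eta_n)\psi$, and that $\{\dd\rho_k(\bxi)\phi\,;\,\bxi\in B\}$ is strongly bounded because $p_C(\dd\rho_k(\bxi)\phi)\le p_{C\times B}(\phi)$.

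The gap is the one you flag yourself, and it is a real one rather than a routine verification. Your bound $\sup_{\bxi\in B}\|(\rho(h)-\one)\dd\rho_k(\bxi)\phi\|\le |t|\,p_{\{\eta\}\times B}(\phi)$ is derived only for $h=\exp(t\eta)$; continuity (let alone smoothness) along one-parameter subgroups does not yield continuity at $\one$, and a general locally convex Lie group need not even be locally exponential. The repair is to integrate along arbitrary smooth paths in a chart, using the left logarithmic derivative $\delta h_s$, whose values over a compact parameter interval lie in a bounded subset $C\subseteq\fg^{\sharp}$, so that $p_{C\times B}(\phi)$ controls the increments uniformly. Similarly, your first term requires that $\mathrm{Ad}_{h^{-1}}\bxi-\bxi\to 0$ uniformly over $\bxi\in B$ in the Mackey sense (contained in $\epsilon(h)\cdot C'$ for a fixed bounded $C'$ with $\epsilon(h)\to 0$), which does not follow from continuity of $\mathrm{Ad}$ alone and has to be extracted from its smoothness. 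These uniformity estimates are exactly the content of the cited lemma; as written, your argument establishes smoothness of $t\mapsto\rho(\exp(t\eta))\psi$ but not of the orbit map on $G^{\sharp}$. So: first half identical to the paper and complete; second half the right skeleton for the lemma the paper cites, but with its decisive estimates left open.
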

\begin{proof}
For fixed $g\in G^{\sharp}$, we show that the linear map 
$\rho(g) \colon \cH^{\infty} \rightarrow \cH^{\infty}$
is continuous. If $B \subseteq (\fg^\sharp)^k$ is bounded, then so is 
$\mathrm{Ad}_{g}(B)$, as the action 
$\mathrm{Ad}_{g} \colon (\fg^\sharp)^k \rightarrow (\fg^\sharp)^k$ 
of $g$ in the $k$-fold product of the adjoint representation is a homeomorphism.
From
\begin{eqnarray*}
p_{B}(\rho(g)\psi) &=& \sup_{\bxi \in B} \|\dd\rho_k(\bxi)\rho(g)\psi\|
 =  \sup_{\bxi \in B} \|\rho(g)\dd\rho_k(\mathrm{Ad}_{g^{-1}}(\bxi))\psi\|\\
 &= & p_{\mathrm{Ad}_{g^{-1}}(B)}(\psi)\,,
\end{eqnarray*}
we then see that $\rho(g)$ is strongly continuous.
If we fix $\psi \in \cH^{\infty}$, then the orbit map $g \mapsto \rho(g)\psi$ is smooth 
in the \emph{norm} topology on $\cH^{\infty} \subseteq \cH$ by definition, but we still need
to show that it is smooth in the \emph{strong} topology. This follows from 
\cite[Lemma 3.24]{JN15}.
\end{proof}

Our (somewhat laborious) proof of Theorem \ref{Thm:projspace} 
now allows us to apply Proposition~\ref{prop:sepsmooth} 
in local coordinates, yielding the following result.

\begin{Proposition}
The locally convex Lie group $G$ acts separately smoothly 
on $\bP(\cH^{\infty})$ by K\"ahler automorphisms.
This action is covered by a separately smooth action of 
$G^{\sharp}$ on the prequantum line bundle 
$\mathbb{L}(\cH^{\infty}) \rightarrow \bP(\cH^{\infty})$ by 
holomorphic, connection-preserving bundle automorphisms. 
\end{Proposition}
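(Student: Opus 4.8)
The plan is to transfer the separate smoothness established at the level of $\cH^{\infty}$ in Proposition~\ref{prop:sepsmooth} down to $\bP(\cH^{\infty})$ and up to $\bL(\cH^{\infty})$, exploiting that the charts constructed in Section~\ref{Section1} are built from the scalar product alone. I would first treat the action of $G^{\sharp}$ on the sphere $\bS(\cH^{\infty})$. For fixed $g\in G^{\sharp}$, the map $\psi\mapsto \rho(g)\psi$ is a strongly continuous linear isometry by Proposition~\ref{prop:sepsmooth}, and expressed in the charts $\kappa_{\psi}$, $\kappa_{\psi'}$ of $\bS(\cH^{\infty})$ it becomes $v\mapsto \rho(g)(\psi+v)/\mathrm{Re}\lra{\psi'}{\rho(g)(\psi+v)}-\psi'$, a composition of the continuous linear map $\rho(g)$ with the smooth chart transitions already shown to be holomorphic/smooth; hence each $\alpha_{g}$ is smooth on $\bS(\cH^{\infty})$. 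Since the projection $\bS(\cH^{\infty})\to\bP(\cH^{\infty})$ and the associated bundle construction $\bL(\cH^{\infty})=\bS(\cH^{\infty})\times_{\U(1)}\C$ are smooth, the induced maps $\alpha_{[g]}$ on $\bP(\cH^{\infty})$ and the linear bundle automorphisms on $\bL(\cH^{\infty})$ are smooth as well.

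For the orbit maps I would fix $\psi$ and $[\psi]$ and use that $g\mapsto \rho(g)\psi$ is smooth into $\cH^{\infty}$ (strong topology) by Proposition~\ref{prop:sepsmooth}; post-composing with the smooth chart $\kappa_{\psi}$ (defined only via the continuous scalar product, hence a smooth map $U_{[\psi]}\to T_{[\psi]}$) shows that the orbit map $g\mapsto \rho(g)[\psi]$ is smooth near the identity, and translating by a fixed group element — whose action is smooth by the previous paragraph — extends this to all of $G^{\sharp}$. The same argument, carried out in the bundle charts $\Lambda_{\psi}$, gives smoothness of the orbit maps on $\bL(\cH^{\infty})$. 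Separate smoothness of the $G$-action on $\bP(\cH^{\infty})$ then follows because it is the composite of the $G^{\sharp}$-action with the principal bundle projection $G^{\sharp}\to G$.

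It remains to check that the action preserves the relevant structures. Each $\rho(g)$ is $\C$-linear and isometric, so it commutes with the complex structure $J$ and preserves the scalar product; consequently $\alpha_{g}^*H=H$ by the chart formula \eqref{localomega}, which shows that $G$ acts by Hermitean automorphisms, hence by K\"ahler (in particular symplectic) automorphisms of $(\bP(\cH^{\infty}),\Omega)$. On $\bL(\cH^{\infty})$ the lifted maps are fibrewise $\C$-linear (holomorphic), and because the connection form $\alpha$ of \eqref{sconn} is built from the $\U(1)$-invariant quantity $-i\lra{\psi}{\delta v}$, its invariance under the isometries $\rho(g)$ gives $\alpha_{g}^*\alpha=\alpha$, so the bundle automorphisms are connection-preserving.

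The main obstacle is the separate smoothness rather than the invariance, which is essentially formal. Specifically, the delicate point is that smoothness of the orbit and action maps must be checked in the \emph{strong} topology of Definition~\ref{defstrong}, not merely the norm topology; the crucial input is that $\rho(g)$ is strongly continuous (shown in Proposition~\ref{prop:sepsmooth} via the $\mathrm{Ad}$-invariance estimate) and that the chart transitions, being polynomial expressions in the strongly continuous scalar product and in $\rho(g)$, remain smooth for this finer topology. Once one is careful that every map appearing in local coordinates is a composite of strongly smooth maps, the separate (as opposed to joint) smoothness is exactly what the charts deliver, and no further regularity is available or claimed.
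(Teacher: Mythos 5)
Your proposal is correct and follows essentially the same route as the paper: both express the action in the charts of Section~\ref{Section1}, where (up to the already-established smooth transition functions) it reduces to the strongly continuous linear map $\rho(g)$, invoke Proposition~\ref{prop:sepsmooth} for the action and orbit maps, and deduce $\rho(g)^*H=H$ and $\rho(g)^*\alpha=\alpha$ from unitarity. The only cosmetic difference is that the paper uses the target chart adapted to $\rho(g)\psi$, making the local expression literally $v\mapsto\rho(g)v$, whereas you keep an arbitrary target chart and absorb the discrepancy into a transition function.
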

\begin{proof}
In local coordinates, the action of $G^{\sharp}$ looks like
$T_{[\psi]} \rightarrow T_{\overline{\rho}(g)[\psi]} \colon v \mapsto \rho(g)v$
on $\bP(\cH^{\infty})$, like
$T_{\psi} \rightarrow T_{\rho(g)\psi} \colon v \mapsto \rho(g)v$
on $\bS(\cH^{\infty})$, and like
$T_{[\psi]} \times \C \rightarrow T_{\overline{\rho}(g)[\psi]} \times \C \colon v\oplus z 
\mapsto \rho(g)v \oplus z$
on $\bL(\cH^{\infty})$.
%
It thus follows from Proposition~\ref{prop:sepsmooth} that the group action is 
separately smooth, and a holomorphic line bundle isomorphism of
$\bL(\cH^{\infty}) \rightarrow \bP(\cH^{\infty})$.
In local coordinates, the pushforwards 
$\rho(g)_{*} \colon T_{[\psi]} \rightarrow T_{\overline{\rho}(g)[\psi]}$
and 
$\rho(g)_{*} \colon T_{\psi} \rightarrow T_{\rho(g)\psi}$
are simply given by $\delta v \mapsto \rho(g)\delta v$, 
so $\rho(g)^*H = H$ and $\rho(g)^*\alpha = \alpha$ follow from
unitarity of $\rho(g)$ and
the definitions \eqref{Hermitean} and \eqref{sconn}.
\end{proof}

For $\xi \in \fg^{\sharp}$, the fundamental vector field $X_{\xi}(\psi) = \dd\rho(\xi)\psi$ on 
$\bS(\cH^{\infty})$ is smooth, as it is
given in local coordinates $v \in T_{\psi}$ by 
\[X_{\xi}(u) = \dd\rho(\xi) (\psi + v)   - \mathrm{Re}\langle \psi, \dd\rho(\xi) v\rangle(\psi + v)\,.\]
Since $L_{X_{\xi}} \alpha = 0$, we have $d(i_{X_{\xi}} \alpha) + i_{X_{\xi}} d\alpha = 0$,
so since $\Omega = d\alpha$, we find
\begin{equation}\label{exactly}
i_{X_{\xi}} \Omega = d(-i_{X_{\xi}}\alpha)\,.
\end{equation}
We therefore find the 
 comomentum map $\fg^{\sharp} \rightarrow C^{\infty}(\bP(\cH^{\infty})), 
\xi \mapsto \mu(\xi)$ with
\begin{equation}\label{rawmoment}
\mu_{[\psi]}(\xi) = \alpha_{\psi}(-X_{\xi}(\psi))\,.
\end{equation}
This evaluates to $\langle \psi, i\dd\rho(\xi) \psi\rangle$,
the expectation in the state $[\psi]$ of the essentially selfadjoint operator
$i\dd\rho(\xi)$ (cf.\ Definition~\ref{defstrong}), 
which is the observable corresponding to the 
symmetry generator $\xi \in \fg$.
Note that for fixed $\xi$, the expression $\psi \mapsto \alpha_{\psi}(-X_{\xi}(\psi))$ is independent 
of the unit vector $\psi \in [\psi]$, and smooth 
because both $\alpha$ and $X_{\xi}$ are smooth. 

We now prove the theorem announced in the introduction 
(Theorem~\ref{Thmhamaction}): 
\begin{Theorem} 
The action of $G^{\sharp}$ on $(\bP(\cH^{\infty}),\Omega)$ is Hamiltonian,
with momentum map
$\mu \colon \bP(\cH^{\infty}) \rightarrow (\fg^{\sharp})'$ given by 
\be\label{eq:momentum}
\mu_{[\psi]}(\xi) = \frac{\lra{\psi}{i\dd\rho(\xi) \psi}}{\lra{\psi}{\psi}}\,.
\ee
This is a smooth, $G^{\sharp}$-equivariant map into the continuous dual $(\fg^{\sharp})'$, equipped
with the topology of uniform convergence on bounded subsets. 
\end{Theorem}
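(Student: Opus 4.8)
The plan is to verify the three assertions in turn: that \eqref{eq:momentum} defines the comomentum (so the action is Hamiltonian), that $\mu$ is $G^{\sharp}$-equivariant, and that it is smooth into $(\fg^{\sharp})'$ with the topology of uniform convergence on bounded sets. The Hamiltonian condition $i_{X_{\xi}}\Omega = d\mu(\xi)$ is essentially already in hand: equations \eqref{exactly} and \eqref{rawmoment} exhibit the comomentum $\mu_{[\psi]}(\xi) = \alpha_{\psi}(-X_{\xi}(\psi)) = \lra{\psi}{i\dd\rho(\xi)\psi}$ for a unit representative $\psi$, and the formula \eqref{eq:momentum} is simply its degree-zero-homogeneous extension $\lra{\psi}{i\dd\rho(\xi)\psi}/\lra{\psi}{\psi}$ to arbitrary representatives, which is manifestly independent of the choice of $\psi \in [\psi]$. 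That $\mu$ takes values in the continuous dual requires, for fixed $[\psi]$, that the linear functional $\xi \mapsto \lra{\psi}{i\dd\rho(\xi)\psi}$ be continuous on $\fg^{\sharp}$; this holds because $\xi \mapsto \dd\rho(\xi)\psi$ is the tangent map at the identity of the smooth orbit map $g \mapsto \rho(g)\psi$, hence a continuous linear map $\fg^{\sharp} \to \cH$, and composing with the continuous functional $\lra{\psi}{i\,\cdot\,}$ yields continuity.

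For equivariance I would compute $\mu_{[\rho(g)\psi]}(\xi)$ directly. Unitarity of $\rho(g)$ leaves the denominator invariant, and the intertwining relation $\rho(g)^{-1}\dd\rho(\xi)\rho(g) = \dd\rho(\mathrm{Ad}_{g^{-1}}\xi)$ already used in Proposition~\ref{prop:sepsmooth} gives $\mu_{[\rho(g)\psi]}(\xi) = \mu_{[\psi]}(\mathrm{Ad}_{g^{-1}}\xi)$, i.e.\ $\mu \circ g = \mathrm{Ad}^{*}(g^{-1}) \circ \mu$, which is exactly $G^{\sharp}$-equivariance for the coadjoint action.

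The main work is smoothness into the strong dual. I would introduce the bilinear map $\beta \colon \cH^{\infty} \times \cH^{\infty} \to (\fg^{\sharp})'$, $\beta(a,b)(\xi) := \lra{a}{i\dd\rho(\xi)b}$, and work in the chart $v \mapsto [\psi + v]$ on $T_{[\psi]}$, where \eqref{eq:momentum} reads $\mu_{[\psi+v]} = (1 + \|v\|^2)^{-1}\beta(\psi+v,\psi+v)$. The crucial estimate is
\[
\sup_{\xi \in B}\bigl|\lra{a}{i\dd\rho(\xi)b}\bigr| \le \|a\|\,\sup_{\xi \in B}\|\dd\rho(\xi)b\| = \|a\|\,p_{B}(b),
\]
for $B \subseteq \fg^{\sharp}$ bounded, which bounds the strong dual seminorm of $\beta(a,b)$ by the product of the norm of $a$ and the first-order strong seminorm $p_{B}(b)$ on $\cH^{\infty}$ from Definition~\ref{defstrong}. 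This shows that $\beta$ is jointly continuous and hence, being bilinear, smooth. Since $v \mapsto \psi + v$ is affine and $v \mapsto (1+\|v\|^2)^{-1}$ is smooth (the scalar product being continuous), the local expression for $\mu$ is a composition of smooth maps and is therefore smooth.

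The obstacle to watch is precisely this last step: one must confirm that the displayed estimate genuinely yields continuity into the uniform-on-bounded-sets topology on the \emph{continuous} dual $(\fg^{\sharp})'$, and then invoke the standard fact that continuous bilinear maps between locally convex spaces are smooth. Everything else — the comomentum identity, the landing in $(\fg^{\sharp})'$, and equivariance — reduces to the algebraic identities above together with the smoothness of the orbit maps supplied by Proposition~\ref{prop:sepsmooth}.
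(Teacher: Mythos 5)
Your proposal is correct and follows essentially the same route as the paper: the Hamiltonian property and equivariance are handled identically, and the smoothness argument rests on the same key point, namely that $(a,b)\mapsto \lra{a}{\dd\rho(\,\cdot\,)b}$ is a continuous (hence smooth) bilinear map into $(\fg^{\sharp})'$ by virtue of the first-order seminorms $p_B$ of Definition~\ref{defstrong} --- you merely make the estimate explicit and work in the projective chart with the $(1+\|v\|^2)^{-1}$ factor, where the paper instead pulls $\mu$ back to $\bS(\cH^{\infty})$ and restricts the globally defined map $\widehat{\mu}_\psi = \lra{\psi}{\dd\rho(\,\cdot\,)\psi}$.
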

\begin{proof}
Since the $G^{\sharp}$-action preserves $\alpha$, it preserves $\Omega$, and $i_{X_{\xi}}\Omega$
is exact by equation~(\ref{exactly}). 
Combining \eqref{exactly} and \eqref{rawmoment}, we have $i_{X_{\xi}}\Omega = d\mu_{\xi}(\xi)$.
The momentum map 
is equivariant by
\[\mu_{[g\psi]}(\xi) = \langle \rho(g)\psi, i\dd\rho(\xi) \rho(g)\psi \rangle
= \langle \psi , i\dd\rho(\mathrm{Ad}_{g^{-1}} (\xi)) \psi\rangle \,.\]
To prove that $\mu$ is smooth, consider its pullback to $\bS(\cH^\infty)$, which is 
the restriction to $\bS(\cH^{\infty})$ of the map 
$\widehat{\mu}\colon \cH^{\infty} \rightarrow (\fg^{\sharp})'$ defined by 
$\widehat{\mu}_{\psi} = \lra{\psi}{\dd\rho(\,\cdot\,)\psi}$.
Note that the map 
\[
\cH^{\infty} \rightarrow \mathrm{Lin}(\fg^{\sharp},\cH^{\infty}), 
\quad \psi \mapsto \dd\rho(\,\cdot\,)\psi\]
is linear, and continuous if $\mathrm{Lin}(\fg^{\sharp},\cH^{\infty})$ is equipped with the topology of uniform convergence on bounded subsets of $\fg$. 
As the scalar product $\cH^{\infty} \times \cH^{\infty} \rightarrow \R$ is continuous, 
the linear map $\cH^{\infty} \times \cH^{\infty} \rightarrow (\fg^{\sharp})', 
 (\psi,\chi) \mapsto \lra{\psi}{\dd\rho(\,\cdot\,)\chi}$ is also continuous, and hence smooth.   
Since $\widehat{\mu}$ is the composition of this map with the (smooth) diagonal map
$\cH^{\infty} \rightarrow \cH^{\infty} \times \cH^{\infty}$, the result follows.
%
%
%
%
%
%
%
\end{proof}

\section{Cocycles for Hamiltonian actions}\label{Section3}


A symplectic action of a locally convex Lie group $G$ 
on a locally convex, symplectic manifold 
$(\cM,\Omega)$ gives rise to \emph{Kostant--Souriau cocycles}. 
\begin{Proposition}[Kostant--Souriau cocycles]\label{KScocycles}
For every $m\in \cM$, the map $\omega_{m} \colon \fg \times \fg \rightarrow \R$
defined by $\omega_{m}(\xi,\eta) = \Omega_{m}(X_{\xi},X_{\eta})$ is a continuous 2-cocycle.
If $\cM$ is a K\"ahler manifold, then 
$\omega_{m} = \mathrm{Im} \,h_{m}$ for a
continuous, positive semidefinite, Hermitean form  
$h_{m} \colon \fg_{\C} \times \fg_{\C} \rightarrow \C$. 
\end{Proposition}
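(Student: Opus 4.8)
The plan is to establish the Kostant--Souriau cocycle statement in two parts: first the cocycle identity in the general symplectic case, then the Hermitean-form refinement in the K\"ahler case. For the cocycle property, I would start from the fundamental vector fields $X_\xi$ of the $G$-action and exploit the fact that the action is symplectic, so $L_{X_\xi}\Omega = 0$ and (by weak Hamiltonicity) $i_{X_\xi}\Omega = d\mu(\xi)$ locally. The key computational input is the standard relation between the bracket of fundamental vector fields and the Lie bracket, namely $[X_\xi, X_\eta] = -X_{[\xi,\eta]}$ (with a sign depending on conventions for a left action), together with Cartan's magic formula. Evaluating $\Omega_m(X_\xi, X_\eta)$ and using $d\Omega = 0$ plus the identity $X_\xi \cdot \Omega(X_\eta, X_\zeta) - \ldots = \Omega([X_\xi,X_\eta],X_\zeta) + \ldots$ should produce the $2$-cocycle condition
\[
\omega_m([\xi,\eta],\zeta) + \omega_m([\eta,\zeta],\xi) + \omega_m([\zeta,\xi],\eta) = 0
\]
after the manifold-dependent terms combine correctly; one must check that the terms involving derivatives of $\Omega_m$ in the base point $m$ cancel, which is where closedness of $\Omega$ enters. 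Continuity of $\omega_m$ in $(\xi,\eta)$ follows because $\xi \mapsto X_\xi(m)$ is continuous (the fundamental vector field depends continuously, indeed linearly, on $\xi$) and $\Omega_m$ is a continuous bilinear form on the tangent space.

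For the K\"ahler refinement, the natural strategy is to use the Hermitean form $H$ whose imaginary part is $\Omega$, as set up in the earlier part of the excerpt. I would define a candidate Hermitean form on the complexification by
\[
h_m(\xi,\eta) := H_m(X_\xi, X_\eta),
\]
extended complex-sesquilinearly to $\fg_\C \times \fg_\C$. Then $\mathrm{Im}\, h_m = \Omega_m(X_\xi, X_\eta) = \omega_m(\xi,\eta)$ is immediate from the relation between $H$ and $\Omega$. Positive semidefiniteness follows from positivity of $H$: since $H_m(\delta v, \delta v) = 2\langle \delta v, \delta v\rangle \geq 0$ for any tangent vector $\delta v$ in the model space, evaluating at $\delta v = X_\xi(m)$ gives $h_m(\xi,\xi) \geq 0$, with the complexified form inheriting this. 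Hermitean symmetry and continuity are likewise inherited from $H$.

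The main obstacle is making precise that the abstract cocycle $\omega_m$ and the geometric form $\mathrm{Im}\, h_m$ agree as forms on $\fg$ (not merely on $\fg_\C$) and that the sesquilinear extension is well defined and independent of choices; in particular one must verify that defining $h_m$ via $H_m(X_\xi, X_\eta)$ really yields a Hermitean (not merely bilinear) form, which hinges on the $J$-invariance of $H$ and the way the complex structure interacts with $X_\xi$ in local coordinates. A secondary subtlety is the possible degeneracy: $\xi \mapsto X_\xi(m)$ need not be injective, so $h_m$ is only positive \emph{semi}definite, exactly as stated, and I would note that its radical corresponds to the stabiliser directions at $m$. Continuity in the locally convex setting should be routine given that $H$ is continuous and $\xi \mapsto X_\xi(m)$ is continuous, but I would flag that one should use the explicit local expression \eqref{localomega} for $H$ rather than any Hilbert-space completeness, to stay within the locally convex category.
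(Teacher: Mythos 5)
Your treatment of the cocycle identity and of continuity coincides with the paper's: $L_{X_\xi}\Omega=0$ gives $L_{X_\xi}\Omega(X_\eta,X_\zeta)=\Omega([X_\xi,X_\eta],X_\zeta)+\Omega(X_\eta,[X_\xi,X_\zeta])$, and feeding this into $0=d\Omega(X_\xi,X_\eta,X_\zeta)$ yields $\delta\omega_m=0$; continuity follows from continuity of $\xi\mapsto X_\xi(m)$ and of $\Omega_m$. That half is fine.

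The K\"ahler half has a genuine gap at the point where you claim that positive semidefiniteness of the complexified form is ``inherited'' from $h_m(\xi,\xi)=H_m(X_\xi,X_\xi)\ge 0$ for \emph{real} $\xi$. Positivity of a Hermitean sesquilinear form on a real form $\fg\subset\fg_\C$ does not imply positivity on $\fg_\C$: for $\zeta=\xi+i\eta$ one has
\[
h_m(\zeta,\zeta)=h_m(\xi,\xi)+h_m(\eta,\eta)-2\,\mathrm{Im}\,h_m(\xi,\eta)\,,
\]
and the cross term is not controlled by the diagonal values on $\fg$ alone (a form with vanishing real part but nonzero imaginary part is nonnegative on $\fg$ yet indefinite on $\fg_\C$). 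The paper sidesteps this by \emph{defining} $h_m$ as the pullback of $H_m$ along the complex-linear extension $D_m\alpha\colon\fg_\C\to T_m\cM$, $\xi+i\eta\mapsto X_\xi(m)+J\,X_\eta(m)$; then $h_m(\zeta,\zeta)=H_m(D_m\alpha(\zeta),D_m\alpha(\zeta))\ge 0$ because $H_m$ is positive on the whole complex tangent space, and Hermiticity and continuity are equally immediate. Your sesquilinear extension does agree with this pullback, precisely because $H_m$ is sesquilinear with respect to $J$ on $T_m\cM$ and a sesquilinear form on $\fg_\C$ is determined by its restriction to $\fg\times\fg$ --- but this identification is exactly the missing step, not the routine check you relegate it to; once it is made, positive semidefiniteness follows from positivity of $H_m$ on $T_m\cM$, not from positivity on the image of $\fg$.
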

\begin{proof}
Since the action is symplectic, $L_{X_{\xi}} \Omega = 0$, and we have
\[L_{X_{\xi}}\Omega(X_{\eta},X_{\zeta}) = \Omega([X_{\xi},X_{\eta}],X_{\zeta})
+ \Omega(X_{\eta}, [X_{\xi},X_{\zeta}])\,.\]
As $\Omega$ is closed, it follows that for all $\xi,\eta,\zeta \in \fg$,
\nnb
0 = d\Omega(X_{\xi},X_{\eta},X_{\zeta}) &=&
(L_{X_{\xi}}\Omega(X_{\eta},X_{\zeta}) + \mathrm{cycl.})
- (\Omega([X_{\xi},X_{\eta}],X_{\zeta})+ \mathrm{cycl.})\\
&=& \Omega(X_\eta, [X_\xi,X_{\zeta}]) + \mathrm{cycl}. = \delta \omega(\xi,\eta,\zeta)\,,
\nne
and $\omega_{m}$ is a cocycle for every $m\in \cM$.
Since the orbit map $g \mapsto \alpha_{g}(m)$ is smooth, the map 
$\xi \mapsto X_{\xi}(m)$ is continuous. Since 
$\Omega_{m} \colon T_{m}\cM \times T_{m}\cM \rightarrow \R$ is smooth,  
the cocycle $\omega_{m}$ is continuous. 
If $\cM$ is K\"ahler, then 
$\Omega_{m}$ is the imaginary part of a positive definite Hermitean form 
$H_{m}$ on $T_{m}\cM$.
We then have $\omega_{m} = \mathrm{Im} \, h_{m}$ for 
the pullback
$h_{m} \colon \fg_{\C} \times \fg_{\C} \rightarrow \C$ 
of $H_{m}$ along the complexification 
$D_{m}\alpha \colon \fg_{\C} \rightarrow T_{m}\cM$
of the derivative of the orbit map.
\end{proof}

\subsection{Cocycles for projective unitary representations}

For the weakly Hamiltonian action of $G$ on $\bP(\cH^{\infty})$ derived from a smooth projective
unitary representation,
the Kostant--Souriau cocycles are given by 
\be \label{eq:stompje}
\omega_{[\psi]}(\xi,\eta) = 2\mathrm{Im}\lra{\dd\rho(\xi^{\sharp}) \psi}{\dd\rho(\eta^{\sharp})\psi}\,,
\ee
where $\xi^{\sharp},\eta^{\sharp}\in \fg^{\sharp}$ are arbitrary lifts of 
$\xi,\eta \in \fg$. (This does not depend on the choice of lift because 
$\lra{\psi}{i\dd\rho(\xi^{\sharp})\psi}$ is real.)

In particular, we see that the Kostant--Souriau cocycles associated to a smooth 
projective representation arise as the image of the momentum map 
$\mu \colon \bP(\cH^{\infty}) \rightarrow (\fg^{\sharp})'$, concatenated with the 
differential 
$\delta \colon \fg^{\sharp} \rightarrow \mathcal{Z}^2(\fg)$ that maps $\lambda \in 
(\fg^{\sharp})'$
to the 2-cocycle 
\[ (\delta\lambda) (\xi,\eta) := \lambda([\xi^{\sharp},\eta^{\sharp}]),\] 
which is again independent of the choice of lift.

\begin{Proposition}
For the weakly Hamiltonian action of $G$ on $\bP(\cH^{\infty})$ derived from a smooth projective unitary representation, we have
$\omega_{[\psi]} = \delta \mu_{[\psi]}$.
\end{Proposition}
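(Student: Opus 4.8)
The plan is to reduce the identity $\omega_{[\psi]} = \delta\mu_{[\psi]}$ to a single commutator computation on smooth vectors, combining the explicit formula \eqref{eq:stompje} for the Kostant--Souriau cocycle with the definitions of the momentum map $\mu$ and of the coboundary operator $\delta$. I would fix a unit representative $\psi \in [\psi]$ so that $\langle \psi, \psi \rangle = 1$; the general case then follows by the homogeneity of \eqref{eq:momentum}. Abbreviating $A := \dd\rho(\xi^{\sharp})$ and $B := \dd\rho(\eta^{\sharp})$ for arbitrary lifts $\xi^{\sharp}, \eta^{\sharp} \in \fg^{\sharp}$ of $\xi, \eta \in \fg$, the left-hand side is $\omega_{[\psi]}(\xi,\eta) = 2\Im \langle A\psi, B\psi \rangle$ by \eqref{eq:stompje}, while the right-hand side unwinds to $(\delta\mu_{[\psi]})(\xi,\eta) = \mu_{[\psi]}([\xi^{\sharp},\eta^{\sharp}]) = \langle \psi, i\,\dd\rho([\xi^{\sharp},\eta^{\sharp}])\psi \rangle$. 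The whole proposition thus amounts to the identity $2\Im \langle A\psi, B\psi \rangle = \langle \psi, i\,\dd\rho([\xi^{\sharp},\eta^{\sharp}])\psi \rangle$.

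The key step is to push the operators across the inner product using skew-symmetry. Since each one-parameter group $t \mapsto \rho(\exp(t\xi^{\sharp}))$ is unitary and preserves $\cH^{\infty}$, the operators $A$ and $B$ map smooth vectors to smooth vectors and satisfy $\langle A\phi, \chi \rangle = -\langle \phi, A\chi \rangle$ for all $\phi, \chi \in \cH^{\infty}$, and likewise for $B$. Writing $\Im z = (z - \bar z)/(2i)$ and using antilinearity of the scalar product in its first argument, I would compute
\[
2\Im \langle A\psi, B\psi \rangle
= \tfrac{1}{i}\left(\langle A\psi, B\psi \rangle - \langle B\psi, A\psi \rangle\right)
= \tfrac{1}{i}\left(-\langle \psi, AB\psi \rangle + \langle \psi, BA\psi \rangle\right)
= i\,\langle \psi, [A,B]\psi \rangle.
\]

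It remains to identify the commutator. Because $\dd\rho$ is a homomorphism of Lie algebras, $[A, B] = [\dd\rho(\xi^{\sharp}), \dd\rho(\eta^{\sharp})] = \dd\rho([\xi^{\sharp}, \eta^{\sharp}])$ on $\cH^{\infty}$, and substituting this into the last display yields precisely $\langle \psi, i\,\dd\rho([\xi^{\sharp}, \eta^{\sharp}])\psi \rangle$, which is $(\delta\mu_{[\psi]})(\xi,\eta)$. The computation is manifestly independent of the chosen lifts, in agreement with the remarks preceding the proposition. Conceptually, this is nothing but the statement that the equivariance of $\mu$ for the $G^{\sharp}$-action (established in Theorem~\ref{Thmhamaction}) forces $\Omega_{[\psi]}(X_{\xi^{\sharp}}, X_{\eta^{\sharp}}) = \mu_{[\psi]}([\xi^{\sharp}, \eta^{\sharp}])$, combined with the fact that the central generator acts trivially on $\bP(\cH^{\infty})$, so that $X_{\xi}$ and $X_{\xi^{\sharp}}$ coincide.

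I do not anticipate a genuine obstacle: the content is a three-line commutator manipulation. The only point demanding care is the consistent bookkeeping of conventions---the antilinearity of $\langle\,\cdot\,,\,\cdot\,\rangle$ in the first slot, the exact placement of the factor $i$, and the sign produced by skew-symmetry---since a single slip would corrupt the final sign. Verifying that every manipulation stays within $\cH^{\infty}$, rather than on the domain of an unbounded self-adjoint operator, is immediate from the invariance of the smooth vectors under $\dd\rho(\fg^{\sharp})$, so no analytic subtleties intervene.
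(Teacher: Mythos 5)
Your proposal is correct and follows essentially the same route as the paper: both proofs reduce $\delta\mu_{[\psi]}(\xi,\eta)=\mu_{[\psi]}([\xi^{\sharp},\eta^{\sharp}])$ to the commutator identity $\dd\rho([\xi^{\sharp},\eta^{\sharp}])=[\dd\rho(\xi^{\sharp}),\dd\rho(\eta^{\sharp})]$ and then use skew-symmetry of $\dd\rho(\xi^{\sharp})$ on $\cH^{\infty}$ to produce $2\Im\lra{\dd\rho(\xi^{\sharp})\psi}{\dd\rho(\eta^{\sharp})\psi}$, matching \eqref{eq:stompje}. The only cosmetic difference is that you run the computation from $\omega_{[\psi]}$ to $\delta\mu_{[\psi]}$ while the paper goes the other way, and your signs and handling of the normalisation $\lra{\psi}{\psi}$ check out.
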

\begin{proof}
This is a direct computation. From \eqref{eq:momentum}, we obtain 
\nnb
\delta\mu_{[\psi]}(\xi,\eta) & = & \frac{\lra{\psi}{i\dd\rho([\xi^{\sharp},\eta^{\sharp}])\psi}}{\lra{\psi}{\psi}}\\
&=& -i\left(\frac{\lra{\dd\rho(\xi^{\sharp})\psi}{\dd\rho(\eta^{\sharp})\psi}}{\lra{\psi}{\psi}} - 
\frac{\lra{\dd\rho(\eta^{\sharp})\psi}{\dd\rho(\xi^{\sharp})\psi}}{\lra{\psi}{\psi}}\right)\\
&=& 2\,\frac{\mathrm{Im}\lra{\dd\rho(\xi^{\sharp}) \psi}{\dd\rho(\eta^{\sharp})\psi}}{\lra{\psi}{\psi}}\,,
\nne
which equals $\omega_{[\psi]}(\xi,\eta)$ for $\|\psi\| = 1$ by \eqref{eq:stompje}.
\end{proof}

From a smooth projective unitary representation, we thus get not only a \emph{class} 
$[\omega_{[\psi]}] \in H^2(\fg,\R)$ in continuous Lie algebra cohomology, 
but a distinguished set
$\mathcal{C} := \{\omega_{[\psi]}\,;\, [\psi]\in \bP(\cH^{\infty})\} \subseteq 
\mathcal{Z}^2(\fg)$
of (cohomologous, cf.\ \cite{JN15}) cocycles. 
As both $\mu$ and $\delta$ are $G$-equivariant, this set 
$\mathcal{C} = \mathrm{Im}(\delta \circ \mu)$ of cocycles is $G$-invariant, 
and every $\omega \in \mathcal{C}$ is the imaginary part of a continuous, positive
semidefinite, Hermitean form on $\fg_{\C}$ by Proposition \ref{KScocycles}.
This sheds geometric light on Propositions~6.6, 6.7 and 6.8 of \cite{JN15}.

\subsection{Characters of the stabiliser group}\label{charstab}

The derivative of the momentum map $\mu \colon \bP(\cH^{\infty}) \rightarrow 
(\fg^{\sharp})'$ is given (for $\|\psi\|=1$) by
\be
D_{[\psi]}\mu(\delta v)(\xi) = 2\mathrm{Re}\langle i\dd\rho(\xi)\psi, \delta v\rangle\,.
\ee
This has some interesting consequences.
We denote the \emph{real} inner product on $\cH^{\infty}$ by 
$(v,w)_{\R} := 2\mathrm{Re} \lra{v}{w}$, and the orthogonal complement 
with respect to $(\,\cdot\,,\,\cdot\,)_{\R}$ by $\perp_{\R}$.
Then the kernel $\mathrm{Ker}(D_{[\psi]}\mu) \subseteq T_{[\psi]} = (\C\psi)^{{\perp}_{\R}}$ 
is precisely the \emph{real} orthogonal complement
$(i\R\psi \oplus i\dd\rho(\fg^{\sharp})\psi)^{\perp_{\R}}$ in $\cH^{\infty}$.

\begin{Proposition}
The derivative $D_{[\psi]}\mu \colon T_{[\psi]} \rightarrow \fg'$ is injective if and only if 
$\dd\rho(\fg^{\sharp})\psi$ spans $\psi^{\perp_{\R}} \subset \cH$ as a real Hilbert space, and zero if and only if 
the identity component $G_{0}$ stabilises $[\psi]$.
\end{Proposition}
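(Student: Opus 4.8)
The plan is to read off both equivalences directly from the two facts already established just above the statement: the formula $D_{[\psi]}\mu(\delta v)(\xi)=2\Re\lra{i\dd\rho(\xi)\psi}{\delta v}=(i\dd\rho(\xi)\psi,\delta v)_{\R}$, and the identification of the kernel with $(i\R\psi\oplus i\dd\rho(\fg^{\sharp})\psi)^{\perp_{\R}}$. Throughout I would work with the real inner product $(\cdot,\cdot)_{\R}$ and exploit that multiplication by $i$ is a real isometry of $\cH$ (and of $\cH^{\infty}$) with $(u,iv)_{\R}=-(iu,v)_{\R}$, so that $(iS)^{\perp_{\R}}=i\,(S^{\perp_{\R}})$ for any real subspace $S$. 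I would also use repeatedly that $\dd\rho$ is skew-adjoint, whence $\lra{\psi}{\dd\rho(\xi)\psi}\in i\R$ and $\dd\rho(\fg^{\sharp})\psi\subseteq\psi^{\perp_{\R}}$, and that $\rho$ restricts to $z\mapsto z\one$ on the centre, so $i\psi\in\dd\rho(\fg^{\sharp})\psi$ and hence $\R\psi\subseteq i\dd\rho(\fg^{\sharp})\psi$.

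For the injectivity half, I would apply the isometry $v\mapsto iv$ to the kernel and rewrite $\ker D_{[\psi]}\mu=0$ as the condition that no nonzero smooth vector is real-orthogonal to $\R\psi+\dd\rho(\fg^{\sharp})\psi$. Since this subspace contains $\C\psi$, that is exactly $M\cap\cH^{\infty}=0$, where $M:=\psi^{\perp_{\R}}\cap(\dd\rho(\fg^{\sharp})\psi)^{\perp_{\R}}$. On the other hand, ``$\dd\rho(\fg^{\sharp})\psi$ spans $\psi^{\perp_{\R}}$ as a real Hilbert space'' means precisely $M=0$ in $\cH$. The implication ``spans $\Rightarrow$ injective'' is then immediate, as $M=0$ forces $M\cap\cH^{\infty}=0$. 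The reverse implication is the crux: I must show that a nonzero closed real subspace $M$ already contains a nonzero \emph{smooth} vector. This is automatic when $\dim\fg<\infty$, since then $M$ has finite real codimension and meets the dense subspace $\cH^{\infty}$ densely; in general I would try to extract it from density of $\cH^{\infty}$ together with the fact that $M$ is cut out by real-orthogonality to the smooth family $\{\psi\}\cup\dd\rho(\fg^{\sharp})\psi$, for instance by splitting $M$ along the closed complex span of $\dd\rho(\fg^{\sharp})\psi$ and its Hermitian orthocomplement. I expect this passage between orthogonality in $\cH$ and in $\cH^{\infty}$ to be the main obstacle.

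For the vanishing half I would argue as follows. The condition $D_{[\psi]}\mu=0$ says that for every $\xi$ the vector $i\dd\rho(\xi)\psi$ is real-orthogonal to the whole tangent space $T_{[\psi]}=(\C\psi)^{\perp_{\R}}$. Here the finite (complex) codimension saves me: because $\psi$ is smooth and $\C\psi$ is one-dimensional, $T_{[\psi]}=(\C\psi)^{\perp_{\R}}\cap\cH^{\infty}$ is dense in $(\C\psi)^{\perp_{\R}}$ — given $w\perp\C\psi$ and smooth $\phi_n\to w$, the corrected vectors $\phi_n-\lra{\psi}{\phi_n}\psi$ stay smooth, lie in $(\C\psi)^{\perp_{\R}}$, and converge to $w$. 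Hence real-orthogonality to $T_{[\psi]}$ forces $i\dd\rho(\xi)\psi\in((\C\psi)^{\perp_{\R}})^{\perp_{\R}}=\C\psi$, i.e. $\dd\rho(\xi)\psi\in\C\psi$ for all $\xi\in\fg^{\sharp}$. Equivalently, the fundamental vector field $X_{\xi}$ vanishes at $[\psi]$ for every $\xi\in\fg$.

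It remains to upgrade ``all fundamental vector fields vanish at $[\psi]$'' to ``$G_{0}$ fixes $[\psi]$''. I would use the smooth orbit map $o\colon G\to\bP(\cH^{\infty})$, $g\mapsto\overline{\rho}(g)[\psi]$, together with equivariance: for a curve $\gamma$ through $e$ with $\gamma'(0)=\xi$ one has $\frac{d}{dt}\big|_{0}o(g\gamma(t))=\overline{\rho}(g)_{*}X_{\xi}([\psi])=0$, so the differential of $o$ vanishes identically. A smooth map on a connected locally convex manifold with vanishing differential is constant, hence $o$ is constant on $G_{0}$, i.e. $G_{0}$ stabilises $[\psi]$. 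The converse is clear: if $G_{0}$ fixes $[\psi]$ then $o$ is constant near $e$, so $X_{\xi}([\psi])=0$ for all $\xi$ and therefore $D_{[\psi]}\mu=0$. The only genuinely delicate point in the whole argument is thus the $\cH$-versus-$\cH^{\infty}$ density bridge in the injectivity half; everywhere else the finite-dimensionality of $\C\psi$ makes the relevant smooth vectors dense.
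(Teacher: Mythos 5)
Your treatment of the second equivalence is complete and follows the paper's route: the density of $T_{[\psi]}=(\C\psi)^{\perp_{\R}}\cap\cH^{\infty}$ in $(\C\psi)^{\perp_{\R}}$ (available because $\C\psi$ has finite codimension) gives $\dd\rho(\fg^{\sharp})\psi\subseteq i\R\psi$, and your ``smooth map with vanishing differential on a connected manifold is constant'' is exactly the paper's appeal to the Fundamental Theorem of Calculus for locally convex spaces. No objection there.

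The problem is the point you yourself flag in the first equivalence, and you do not close it. The direction ``spans $\Rightarrow$ injective'' is formal, as you say. For ``injective $\Rightarrow$ spans'' you must show: if the closed real span $W$ of $\dd\rho(\fg^{\sharp})\psi$ is a proper subspace of $\psi^{\perp_{\R}}$ in $\cH$, then the nonzero closed subspace $(W\oplus\R\psi)^{\perp_{\R}}$ contains a nonzero \emph{smooth} vector. The finite-codimension trick that saved you in the other half is unavailable, since $W$ may be infinite dimensional, and density of $\cH^{\infty}$ alone does not suffice: a nonzero closed subspace of infinite codimension can meet a dense subspace trivially even when its orthocomplement is generated by vectors from that dense subspace (in $\ell^{2}$ with the finitely supported vectors as the dense subspace, the orthocomplement of $\overline{\mathrm{span}}\{e_{n}-2^{-n}e_{1}\,;\,n\geq 2\}$ is the line through $e_{1}+\sum_{n\geq 2}2^{-n}e_{n}$, which contains no nonzero finitely supported vector). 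Your proposed splitting along the closed complex span of $\dd\rho(\fg^{\sharp})\psi$ does not obviously circumvent this. To be fair, the paper's own proof disposes of this half with ``follows immediately from the formula for the kernel'', i.e.\ it silently identifies triviality of $\mathrm{Ker}(D_{[\psi]}\mu)\subseteq\cH^{\infty}$ with triviality of the real orthogonal complement taken in all of $\cH$; so you have not overlooked an argument that the paper supplies --- you have correctly isolated the one step that is not purely formal --- but your proof remains incomplete there (it does go through, for instance, whenever $\dd\rho(\fg^{\sharp})\psi$ is finite dimensional).
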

\begin{proof}
The first statement follows immediately from the formula for the kernel.
For the second statement, note that $D_{[\psi]}\mu = 0$ is equivalent to
$\dd\rho(\fg^{\sharp})\psi = i\R\psi$.
By the Fundamental Theorem of Calculus for locally convex spaces, 
$G_{0}$ stabilises $[\psi] \in \bP(\cH^{\infty})$ if and only if 
$\fg$ stabilises $[\psi]$, which is the case if and only if 
$\dd\rho(\fg^{\sharp})\psi \subseteq i\R \psi$.
\end{proof}

We denote the stabiliser of $\lambda \in (\fg^{\sharp})'$ 
under the coadjoint representation by
$G^{\sharp}_{\lambda}$. Further, we denote by
\[G^{\sharp}_{[\psi]} := \{g\in G^{\sharp}\,: \, [\rho(g)\psi] = [\psi]\}\]
 the preimage in $G^{\sharp}$
of the stabiliser $G_{[\psi]}$ of $[\psi] \in \bP(\cH^{\infty})$, and we denote  
\[\fg^{\sharp}_{[\psi]} := \{\xi \in \fg^{\sharp}\,;\, \dd\rho(\xi) \psi \in i\R\psi\}\,.\]

\begin{Proposition}
For every $[\psi] \in \bP(\cH^{\infty})$, we have 
$G^{\sharp}_{[\psi]} \subseteq G^{\sharp}_{\mu_{[\psi]}}$. 
\end{Proposition}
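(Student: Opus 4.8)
The plan is to deduce the inclusion directly from the $G^{\sharp}$-equivariance of the momentum map established in Theorem~\ref{Thmhamaction}, together with the fact that $\mu$ is a genuine function on the projective space $\bP(\cH^{\infty})$, so its value depends only on the ray and not on the chosen representative. This is the standard principle that, for any equivariant momentum map, the stabiliser of a point sits inside the stabiliser of its momentum value.

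First I would recall the equivariance identity computed in the proof of the momentum-map theorem: for a unit vector $\psi$ and any $g \in G^{\sharp}$,
\[
\mu_{[\rho(g)\psi]}(\xi) = \lra{\rho(g)\psi}{i\dd\rho(\xi)\rho(g)\psi} = \lra{\psi}{i\dd\rho(\mathrm{Ad}_{g^{-1}}\xi)\psi} = \mu_{[\psi]}(\mathrm{Ad}_{g^{-1}}\xi)\,,
\]
where the middle step uses $\rho(g)^{-1}\dd\rho(\xi)\rho(g) = \dd\rho(\mathrm{Ad}_{g^{-1}}\xi)$. This says precisely that $\mu_{[\rho(g)\psi]} = \mathrm{Ad}^{*}_{g}\,\mu_{[\psi]}$, where $\mathrm{Ad}^{*}$ denotes the coadjoint action $\mathrm{Ad}^{*}_{g}\lambda = \lambda \circ \mathrm{Ad}_{g^{-1}}$ on $(\fg^{\sharp})'$ defining the stabiliser $G^{\sharp}_{\mu_{[\psi]}}$.

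Next I would take $g \in G^{\sharp}_{[\psi]}$, that is, $[\rho(g)\psi] = [\psi]$. Since $\mu$ is defined on $\bP(\cH^{\infty})$ via the normalised formula \eqref{eq:momentum}, the equality of rays $[\rho(g)\psi] = [\psi]$ immediately forces $\mu_{[\rho(g)\psi]} = \mu_{[\psi]}$. Combining this with the equivariance identity yields $\mathrm{Ad}^{*}_{g}\,\mu_{[\psi]} = \mu_{[\psi]}$, which is exactly the assertion that $g$ fixes $\mu_{[\psi]}$ under the coadjoint representation, i.e.\ $g \in G^{\sharp}_{\mu_{[\psi]}}$. As $g$ was arbitrary in $G^{\sharp}_{[\psi]}$, the inclusion follows.

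There is essentially no obstacle here: the argument is two lines once equivariance is in hand. The only points deserving a moment's care are the bookkeeping of the coadjoint convention ($\mathrm{Ad}^{*}_{g}\lambda = \lambda \circ \mathrm{Ad}_{g^{-1}}$, so that the stabiliser condition reads correctly) and the trivial observation that $\mu_{[\psi]}$ depends only on the ray — both of which are transparent from \eqref{eq:momentum} thanks to the normalisation by $\lra{\psi}{\psi}$.
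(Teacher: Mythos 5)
Your proof is correct and follows essentially the same route as the paper: both arguments reduce the claim to the $G^{\sharp}$-equivariance identity $\mu_{[\rho(g)\psi]} = \mu_{[\psi]}\circ \mathrm{Ad}_{g^{-1}}$ and then observe that $[\rho(g)\psi]=[\psi]$ forces the two sides to agree, since $\mu$ is well defined on rays. Nothing is missing.
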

\begin{proof}
Since the momentum map is $G^{\sharp}$-equivariant, we have $g \in G_{\mu_{[\psi]}}$
if and only if 
\be\label{eq:statemu}
\frac{\lra{\rho(g)\psi}{i\dd\rho(\xi)\rho(g)\psi}}{\lra{\rho(g)\psi}{\rho(g)\psi}}
=
\frac{\lra{\psi}{i\dd\rho(\xi)\psi}}{\lra{\psi}{\psi}} 
\ee
for all $\xi \in \fg$. This is clearly the case if $g\in G^{\sharp}_{[\psi]}$.
%
\end{proof}
\begin{Proposition} \label{IntegralCharacters}
The restriction of $-i\mu_{[\psi]} \colon \fg^{\sharp} \rightarrow i\R$ to $\fg^{\sharp}_{[\psi]}$
is a Lie algebra character.
It integrates to a group character on any Lie subgroup of~$G^{\sharp}_{[\psi]}$.
%
\end{Proposition}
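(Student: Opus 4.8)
The plan is to prove the two assertions separately. For the Lie algebra statement I would first note that $-i\mu_{[\psi]}$ is linear and, since $i\dd\rho(\xi)$ is symmetric, takes values in $i\R$; so it remains only to check that it annihilates brackets. The cleanest route is a direct computation: for $\xi,\eta \in \fg^\sharp_{[\psi]}$ I would write $\dd\rho(\xi)\psi = ia\psi$ and $\dd\rho(\eta)\psi = ib\psi$ with $a,b\in\R$, so that $\dd\rho(\xi)\dd\rho(\eta)\psi = -ab\psi = \dd\rho(\eta)\dd\rho(\xi)\psi$ and hence $\dd\rho([\xi,\eta])\psi = 0$. This shows at once that $\fg^\sharp_{[\psi]}$ is a subalgebra and that $\mu_{[\psi]}([\xi,\eta]) = \lra{\psi}{i\dd\rho([\xi,\eta])\psi}/\lra{\psi}{\psi} = 0$, so the restriction of $-i\mu_{[\psi]}$ is a character. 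Alternatively one can invoke $\omega_{[\psi]} = \delta\mu_{[\psi]}$ together with \eqref{eq:stompje}: the quantity $\lra{\dd\rho(\xi)\psi}{\dd\rho(\eta)\psi} = ab\lra{\psi}{\psi}$ is real, so its imaginary part vanishes.

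For the integration, the key point I would exploit is that the entire stabiliser carries a natural $\U(1)$-valued character, independent of any smoothness considerations. For $g \in G^\sharp_{[\psi]}$ one has $\rho(g)\psi \in \C^\times \psi$, and unitarity of $\rho(g)$ forces $\rho(g)\psi = \chi(g)\psi$ with $\chi(g) \in \U(1)$, explicitly $\chi(g) = \lra{\psi}{\rho(g)\psi}/\lra{\psi}{\psi}$. I would then verify the homomorphism property by the one-line computation $\rho(gh)\psi = \chi(h)\rho(g)\psi = \chi(g)\chi(h)\psi$, yielding an (a priori purely abstract) group homomorphism $\chi \colon G^\sharp_{[\psi]} \to \U(1)$.

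Finally, given a Lie subgroup $H \subseteq G^\sharp_{[\psi]}$, I would restrict $\chi$ and match derivatives. Smoothness of $\chi|_H$ follows from smoothness of the orbit map $h \mapsto \rho(h)\psi$ (the restriction to $H$ of the smooth orbit map on $G^\sharp$) composed with the continuous functional $\lra{\psi}{\,\cdot\,}$, so that $\chi|_H$ is a genuine group character. Differentiating $\rho(\exp(t\xi))\psi = \chi(\exp(t\xi))\psi$ at $t=0$ for $\xi \in \mathrm{Lie}(H) \subseteq \fg^\sharp_{[\psi]}$ gives $\dd\rho(\xi)\psi = (D_e\chi)(\xi)\psi$, and hence $-i\mu_{[\psi]}(\xi) = \lra{\psi}{\dd\rho(\xi)\psi}/\lra{\psi}{\psi} = (D_e\chi)(\xi)$, so $\chi|_H$ integrates $-i\mu_{[\psi]}|_{\mathrm{Lie}(H)}$. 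I expect the only genuine subtlety to be exactly this smoothness issue: since $G^\sharp_{[\psi]}$ need not itself be a Lie subgroup of $G^\sharp$, the character $\chi$ is globally only an abstract homomorphism, and it can be promoted to a smooth group character precisely on those subgroups where the orbit map is smooth — which is why the statement is phrased for Lie subgroups of $G^\sharp_{[\psi]}$.
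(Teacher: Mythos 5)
Your proposal is correct and follows essentially the same route as the paper: the bracket is killed because $\dd\rho(\xi)$ and $\dd\rho(\eta)$ act on $\psi$ by commuting scalars, and the integrating character is the same function $g \mapsto \lra{\psi}{\rho(g)\psi}/\lra{\psi}{\psi}$, whose derivative at the identity is $-i\mu_{[\psi]}$. The only cosmetic difference is that the paper observes this function is smooth on all of $G^{\sharp}$ (with $G^{\sharp}_{[\psi]}$ as the preimage of the unit circle) and then restricts, whereas you argue smoothness directly on the Lie subgroup $H$; both yield the same conclusion.
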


\begin{proof}
For $\xi \in \fg^{\sharp}_{[\psi]}$, we have $\dd\rho(\xi)\psi = -i\mu(\xi) \psi$.
As
\[\dd\rho([\xi,\eta])\psi = [\dd\rho(\xi), \dd\rho(\eta)] \psi = 0\quad\text{for}\quad
\xi, \eta \in \fg^{\sharp}_{[\psi]}\,,\]
it follows that $-i\mu$ is an $i\R$-valued character.
Similarly, the smooth map
\[F\colon G^{\sharp} \rightarrow \C,\quad 
F(g) := \frac{\lra{\psi}{\rho(g)\psi}}{\lra{\psi}{\psi}}\] 
is a $\U(1)$-valued character when 
restricted to $G_{[\psi]}^{\sharp}$, as $\rho(g)\psi = F(g)\psi$
on that subgroup. 
In fact, $F \colon G^{\sharp} \rightarrow \C$ takes values in the unit ball
$\Delta \subseteq \C$, and $G_{[\psi]}^{\sharp}$ is the preimage 
of the unit circle $\partial \Delta$.
The derivative of $F$ at the unit $\one \in G$ is $D_{\one}F = -i\mu_{[\psi]}$, so
for any Lie subgroup $H \subseteq G^{\sharp}_{[\psi]}$, 
the restriction of $F$ to $H$ is a $\U(1)$-valued smooth character
that integrates $-i\mu_{[\psi]}|_{\mathrm{Lie}(H)}$.
\end{proof}
%

Note that the image of $\mu$ is contained in the hyperplane 
$(\fg^{\sharp})'_{-1}\subset (\fg^{\sharp})'$
of elements that evaluate to $-1$ on $1 \in 
\R = \mathrm{Ker}(\fg^{\sharp} \rightarrow \fg)$.
Now suppose that the image of $D_{[\psi]}\mu$ is dense in $T_{\mu_{[\psi]}} 
(\fg^{\sharp})'_{-1} = (\fg^{\sharp})'_{0} \simeq \fg'$. Since 
$\mathrm{Im} (D_{[\psi]}\mu) \subseteq (\fg^{\sharp}/ \fg^{\sharp}_{\psi})'$, we then have 
$\fg^{\sharp}_{\psi}= \{0\}$, so that $\fg^{\sharp}_{[\psi]} = \R$.
For points $[\psi] \in \bP(\cH^{\infty})$ where the image of $D_{[\psi]}\mu$ is dense, 
the identity component of any Lie subgroup 
$H \subseteq G_{[\psi]}$ is therefore $\U(1)$, and since the 
character on $\U(1)\subseteq G^{\sharp}_{[\psi]}$ is always the identity,
Proposition~\ref{IntegralCharacters} yields no extra information.

However, Proposition~\ref{IntegralCharacters} does yield nontrivial 
integrality requirements if $G^{\sharp}_{[\psi]}$ is strictly bigger than $\U(1)$,
which one expects to be the case 
for extremal points of the momentum set $\mathrm{Im} \mu$. Compare this to
Lemma~2.1 and Theorem~8.1 in \cite{GS82},
where it is shown that for compact Lie groups $G$, the vertices of the momentum polygon are integral lattice points in the dual $\mathfrak{h}'$ of the Cartan subalgebra.

\section{Diffeological Smoothness}\label{sec:diffeosmoothness}
As noted in the introduction, the action 
$G^{\sharp} \times \bP(\cH^{\infty}) \rightarrow \bP(\cH^{\infty})$ is separately smooth,
but not necessarily smooth.
However, if we settle for smoothness in the sense of diffeological spaces,
then one can hope for this action to be smooth
for the (large) class of
\emph{regular} Lie groups modelled on \emph{barrelled} spaces, which 
includes regular Fr\'echet and LF Lie groups.  
Here we prove the infinitesimal version of this, namely that 
the infinitesimal action
$\fg^{\sharp} \times \cH^{\infty} \rightarrow \cH^{\infty}$ is a smooth map of diffeological spaces.

\subsection{Infinitesimal action}
Let $\overline{\rho}$ be a smooth projective unitary representation of 
a locally convex Lie group $G$ modelled on a 
\emph{barrelled} Lie algebra $\fg$.
\begin{Lemma}\label{continuity}
If $\xi \colon \R^n \rightarrow \fg$ and $\psi \colon \R^{m} \rightarrow \cH^{\infty}$
are continuous, then the map $\dd\rho(\xi)\psi \colon \R^{n} \times \R^{m}\rightarrow \cH^{\infty}$ defined by 
$(s,t) \mapsto \dd\rho(\xi_{t})\psi_{s}$ is continuous.
\end{Lemma}
\begin{proof}
Since the Lie algebra action $\fg^{\sharp} \times \cH^{\infty} \rightarrow \cH^{\infty}$
is sequentially continuous by \cite[Lemma 3.14]{JN15}, the same holds 
for its concatenation with the continuous map
$(\xi,\psi) \colon \R^n \times \R^m \rightarrow \fg^{\sharp} \times \cH^{\infty}$.
Since $\R^n \times \R^m$ is first countable, this implies continuity.
\end{proof}

\begin{Lemma}\label{C1lemma}
If $\xi \colon \R^n \rightarrow \fg^{\sharp}$ and $\psi \colon \R^m \rightarrow \cH^{\infty}$ 
are $C^1$, then so is $\dd\rho(\xi)\psi$, and 
$D_{(v_1,v_2)}(\dd\rho(\xi)\psi)_{s,t} = \dd\rho(\partial_{v_1}\xi_{s})\psi_{t} + \dd\rho(\xi_{s})\partial_{v_2}\psi_{t}$.
\end{Lemma}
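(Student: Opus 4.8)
The plan is to verify directly the two defining properties of a $C^1$ map for $F \colon \R^n \times \R^m \to \cH^\infty$, $F(s,t) := \dd\rho(\xi_s)\psi_t$: that every directional derivative exists and is given by the asserted Leibniz formula, and that the total derivative $DF \colon (\R^n \times \R^m)^2 \to \cH^\infty$ is continuous. The key device throughout is Lemma \ref{continuity}, which allows one to pass to limits and to check continuity after precomposing with continuous maps out of finite-dimensional parameter spaces; this is indispensable because the infinitesimal action $\fg^\sharp \times \cH^\infty \to \cH^\infty$ is only sequentially, and not jointly, continuous. I will also use that for fixed $\zeta \in \fg^\sharp$ the operator $\dd\rho(\zeta)$ is a \emph{strongly} continuous endomorphism of $\cH^\infty$, which follows from $p_B(\dd\rho(\zeta)\psi) = p_{B \times \{\zeta\}}(\psi)$ — with $B \times \{\zeta\}$ bounded in $(\fg^\sharp)^{k+1}$ whenever $B \subseteq (\fg^\sharp)^k$ is bounded — exactly as in the proof of Proposition \ref{prop:sepsmooth}.

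For the directional derivative at $(s,t)$ in a direction $(v_1,v_2)$, I would form the difference quotient of the curve $h \mapsto \dd\rho(\xi_{s+hv_1})\psi_{t+hv_2}$ and split it by the usual telescoping identity into
\[
\dd\rho\Big(\tfrac{\xi_{s+hv_1}-\xi_s}{h}\Big)\psi_{t+hv_2}
\;+\; \dd\rho(\xi_s)\,\tfrac{\psi_{t+hv_2}-\psi_t}{h}\,.
\]
In the second summand, $\tfrac1h(\psi_{t+hv_2}-\psi_t) \to \partial_{v_2}\psi_t$ in $\cH^\infty$ because $\psi$ is $C^1$, and $\dd\rho(\xi_s)$ is strongly continuous and linear, so this term converges to $\dd\rho(\xi_s)\partial_{v_2}\psi_t$. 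The first summand is where both arguments move simultaneously, and this is the main obstacle: one cannot invoke joint continuity of the action. I would resolve it by observing that the difference quotient $\zeta_h := \tfrac1h(\xi_{s+hv_1}-\xi_s)$ is continuous in $h$ for $h \neq 0$ and converges to $\partial_{v_1}\xi_s$ as $h \to 0$ (since $\xi$ is $C^1$), so that $h \mapsto \zeta_h$ extends to a continuous curve in $\fg^\sharp$ near $0$, while $h \mapsto \psi_{t+hv_2}$ is a continuous curve in $\cH^\infty$. Lemma \ref{continuity}, applied to these two curves and restricted to the diagonal, shows that $h \mapsto \dd\rho(\zeta_h)\psi_{t+hv_2}$ is continuous, hence tends to $\dd\rho(\partial_{v_1}\xi_s)\psi_t$ as $h \to 0$. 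Adding the two limits yields the claimed formula for $D_{(v_1,v_2)}F_{s,t}$.

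It remains to check that the resulting map $DF$ is continuous, which I would again reduce to two applications of Lemma \ref{continuity}. For the first summand $\dd\rho(\partial_{v_1}\xi_s)\psi_t$: the map $(s,v_1) \mapsto \partial_{v_1}\xi_s = D\xi(s)(v_1)$ is continuous into $\fg^\sharp$ (as $\xi$ is $C^1$) and $t \mapsto \psi_t$ is continuous into $\cH^\infty$, so Lemma \ref{continuity} gives continuity of $((s,v_1),t) \mapsto \dd\rho(\partial_{v_1}\xi_s)\psi_t$, and hence of the first summand as a function on $(\R^n \times \R^m)^2$. Symmetrically, $s \mapsto \xi_s$ is continuous into $\fg^\sharp$ and $(t,v_2) \mapsto \partial_{v_2}\psi_t$ is continuous into $\cH^\infty$, so Lemma \ref{continuity} yields continuity of the second summand $\dd\rho(\xi_s)\partial_{v_2}\psi_t$. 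Being a sum of continuous maps, $DF$ is continuous, and therefore $F = \dd\rho(\xi)\psi$ is $C^1$ with the stated derivative.
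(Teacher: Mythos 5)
Your proof is correct and follows essentially the same route as the paper's: the same telescoping decomposition of the difference quotient, the same device of extending the difference-quotient curves continuously to $h=0$ so that Lemma~\ref{continuity} yields the limit of the term where both arguments vary, and the same two further applications of Lemma~\ref{continuity} to $(D\xi,\psi)$ and $(\xi,D\psi)$ for continuity of the derivative. The only (harmless) variation is that for the term $\dd\rho(\xi_s)\Delta\psi_t(h)$ you invoke strong continuity of the fixed operator $\dd\rho(\xi_s)$ rather than Lemma~\ref{continuity} with a constant curve.
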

\begin{proof}
For the directional derivative along $(v_1,v_2) \in T_{s,t}(\R^{n}\times \R^{m})$,
note that 
\[
D_{v_1,v_2}(\dd\rho(\xi)\psi)_{s,t} =
\lim_{\varepsilon \rightarrow 0} \dd\rho(\Delta \xi_{s}(\varepsilon)) \psi_{t+\varepsilon v_2}
+ \dd\rho(\xi_{s})\Delta \psi_{t}(\varepsilon),
\]
with difference quotients $\Delta\xi$ and $\Delta \psi$ defined by  
$\Delta \xi_{s}(\varepsilon) := \frac{1}{\varepsilon}(\xi_{s+\varepsilon v_1} - \xi_{s})$
and
$\Delta \psi_{t}(\varepsilon) := \frac{1}{\varepsilon}(\psi_{t+\varepsilon v_2} - \psi_{t})$
for $\varepsilon \neq 0$, and 
$\Delta \xi_{s}(0) := \partial_{v_1}\xi_{s}$ 
and
$\Delta \psi_{t}(0) := \partial_{v_2}\psi_{t}$ for $\varepsilon =0$. 
Since $\Delta\xi$ and $\Delta\psi$ are continuous in $\varepsilon$, the formula 
for $D_{v_1,v_2}(\dd\rho(\xi)\psi)_{s,t}$ follows by Lemma \ref{continuity}.
Another application of this lemma to $(D\xi,\psi)$ and $(\xi,D\psi)$ shows that 
the derivative is continuous.
\end{proof}
 
\begin{Proposition}\label{PropCinfty}
If $\xi \colon \R^n \rightarrow \fg^{\sharp}$ and $\psi \colon \R^m \rightarrow \cH^{\infty}$ 
are $C^k$ for $k\in \N$ or $k=\infty$, then so is $\dd\rho(\xi)\psi$.
\end{Proposition}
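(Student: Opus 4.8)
The plan is to prove Proposition~\ref{PropCinfty} by induction on $k$, using Lemma~\ref{continuity} ($k=0$) and Lemma~\ref{C1lemma} ($k=1$) as the base cases. The statement for $k=\infty$ follows once it is established for all finite $k$, since a map into a locally convex space is $C^\infty$ precisely when it is $C^k$ for every $k$. So the real content is the inductive step: assuming the claim holds for $C^k$ inputs, I would like to conclude it for $C^{k+1}$ inputs.

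The key observation is that Lemma~\ref{C1lemma} already gives the right product-rule formula for the first derivative:
\[
D_{(v_1,v_2)}(\dd\rho(\xi)\psi)_{s,t} = \dd\rho(\partial_{v_1}\xi_{s})\psi_{t} + \dd\rho(\xi_{s})\partial_{v_2}\psi_{t}\,.
\]
First I would observe that if $\xi$ and $\psi$ are $C^{k+1}$, then each of $\partial_{v_1}\xi$ and $\partial_{v_2}\psi$ is $C^k$ (as a function of the base point, parametrised by the direction), while $\xi$ and $\psi$ themselves remain $C^{k+1}$, hence in particular $C^k$. Each of the two summands above is therefore of the form $\dd\rho(\eta)\phi$ with both arguments $C^k$, so by the induction hypothesis each summand is a $C^k$ function of $(s,t)$ (and of the directions $v_1,v_2$, which enter linearly). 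This shows that the first derivative $D(\dd\rho(\xi)\psi)$ is $C^k$, which is exactly the statement that $\dd\rho(\xi)\psi$ is $C^{k+1}$.

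The main technical point to handle carefully is the bookkeeping of the extra tangent variables: the derivative of a map on $\R^n\times\R^m$ naturally lives on the tangent bundle $T(\R^n\times\R^m)\cong \R^n\times\R^m\times\R^n\times\R^m$, so after differentiating once I am no longer looking at a map of exactly the same form but at one with additional (linearly occurring) parameters. To keep the induction clean I would either phrase the inductive hypothesis for maps $\xi\colon\R^a\to\fg^\sharp$, $\psi\colon\R^b\to\cH^\infty$ with arbitrary $a,b$, so that the enlarged domain is absorbed, or invoke the standard fact that differentiability in the Michor sense is detected on finite-dimensional plots and that linear dependence on auxiliary parameters preserves the differentiability class. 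Either way, the substantive analytic input — sequential continuity of the Lie algebra action and the validity of the product rule — has already been supplied by Lemmas~\ref{continuity} and~\ref{C1lemma}; the remaining step is a routine, if slightly fiddly, induction. I expect the only genuine obstacle to be making the domain-enlargement in the inductive step precise, rather than any new estimate.
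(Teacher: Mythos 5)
Your proposal is correct and is exactly the argument the paper intends: its proof of this proposition is the single line ``This follows by induction on $k$, using Lemmas~\ref{continuity} and~\ref{C1lemma},'' and you have filled in precisely that induction, including the one point worth making explicit (that the statement is already quantified over arbitrary $n,m$, so the enlarged domain after differentiating is absorbed by the induction hypothesis).
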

\begin{proof}
This follows by induction on $k$, 
using Lemmas \ref{continuity} and~\ref{C1lemma}.
\end{proof}

If we equip all locally convex manifolds $\mathcal{M}$ with the diffeology of smooth 
maps from open subsets of Euclidean space into $\mathcal{M}$, then 
the following is simply a reformulation of Proposition \ref{PropCinfty}.
\begin{Proposition}
If $G$ is modelled on a barrelled Lie algebra $\fg$, then
the infinitesimal action
$\fg^{\sharp} \times \cH^{\infty} \rightarrow \cH^{\infty}$
is a smooth map of diffeological spaces.
\end{Proposition}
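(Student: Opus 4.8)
The plan is to unwind the definition of a smooth map of diffeological spaces and to observe that the condition it imposes is exactly the content of Proposition~\ref{PropCinfty}; the text's phrase ``simply a reformulation'' is accurate, so the work is almost entirely bookkeeping. Recall that, with the diffeology of smooth maps from open subsets of Euclidean space, a map $a \colon X \rightarrow Y$ of diffeological spaces is smooth precisely when $a \circ p$ is a plot of $Y$ for every plot $p$ of $X$, and that a plot of a product $X_1 \times X_2$ is the same datum as a pair of plots, one into each factor. Applying this to the infinitesimal action $a \colon \fg^{\sharp} \times \cH^{\infty} \rightarrow \cH^{\infty}$, $(\xi,\psi) \mapsto \dd\rho(\xi)\psi$, what I must show is: for every open $U \subseteq \R^n$ and every pair of smooth maps $\xi \colon U \rightarrow \fg^{\sharp}$ and $\psi \colon U \rightarrow \cH^{\infty}$, the composite $u \mapsto \dd\rho(\xi_u)\psi_u$ is smooth.

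First I would reduce this single-parameter statement to the external product already handled by Proposition~\ref{PropCinfty}. Consider the map $F \colon U \times U \rightarrow \cH^{\infty}$, $(s,t) \mapsto \dd\rho(\xi_s)\psi_t$, in which the two slots of $\dd\rho$ are fed by independent copies of the parameter. By Proposition~\ref{PropCinfty} this map is smooth. The diagonal $\Delta \colon U \rightarrow U \times U$, $u \mapsto (u,u)$, is smooth, and $a \circ p = F \circ \Delta$; hence $a \circ p$ is a composition of smooth maps, and therefore a plot of $\cH^{\infty}$. As the plot $p = (\xi,\psi)$ was arbitrary, the infinitesimal action is smooth in the diffeological sense, which is the assertion.

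The only point that needs a word, and hence the closest thing to an obstacle, is that Proposition~\ref{PropCinfty} is stated for the full Euclidean domains $\R^n$ and $\R^m$, whereas plots are defined on open subsets. This is harmless: smoothness is a local condition, and the inductive argument for Proposition~\ref{PropCinfty}, assembled from Lemmas~\ref{continuity} and~\ref{C1lemma}, uses only difference quotients and directional derivatives and so applies verbatim with $U \times U$ in place of $\R^n \times \R^m$. The barrelledness hypothesis on $\fg$ is inherited through Proposition~\ref{PropCinfty}, where it enters via the sequential continuity of the infinitesimal action, and so requires no separate treatment here.
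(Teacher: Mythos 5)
Your proposal is correct and follows exactly the route the paper intends: the paper simply declares the Proposition to be ``a reformulation'' of Proposition~\ref{PropCinfty}, and your unwinding --- identifying a plot of the product with a pair of plots, invoking Proposition~\ref{PropCinfty} for the external map $(s,t)\mapsto \dd\rho(\xi_s)\psi_t$, and restricting along the diagonal --- is precisely the bookkeeping that reformulation leaves implicit. Your remark that the lemmas localize from $\R^n\times\R^m$ to open subsets is a worthwhile detail the paper glosses over.
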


\bibliographystyle{alpha}

\end{document}